\newtheorem{theorem}{Theorem}[section]
\newtheorem{lemma}[theorem]{Lemma}
\newtheorem{corollary}[theorem]{Corollary}
\newtheorem{problem}[theorem]{Problem}
\newtheorem{proposition}[theorem]{Proposition}
\newtheorem*{notation*}{Notation}
\newtheorem*{p*}{Proposition~\ref{h.s.o.p}}
\theoremstyle{definition}
\newtheorem{definition}[theorem]{Definition}
\newtheorem{remark}[theorem]{Remark}
\newcommand{\M}{{\operatorname{Mat}}}
\newcommand{\C}{{\mathbb C}}
\newcommand{\Z}{{\mathbb Z}}
\newcommand{\K}{K}
\newcommand{\SL}{{\operatorname{SL}}}
\renewcommand{\det}{\operatorname{det}}
\newcommand{\per}{\operatorname{per}}
\newcommand{\sgn}{\operatorname{sgn}}
\newcommand{\Hom}{\operatorname{Hom}}
\newcommand{\kar}{\operatorname{char}}
\newcommand{\rk}{\operatorname{rk}}
\newcommand{\trk}{\operatorname{trk}}
\newcommand{\brk}{\operatorname{brk}}
\newcommand{\symRank}{\operatorname{trk_S}}
\newcommand{\symBRank}{\operatorname{brk_S}}
\newcommand{\sdet} {\operatorname{sdet}}
\newcommand{\sperm} {\operatorname{sper}}
\newcommand{\perm} {\operatorname{perm}}
\newcommand{\tensorsOfRank}[1]{\mathcal{Z}_{#1}}
\newcommand{\id}{\operatorname{id}}
\newcommand{\defeq}{\coloneqq}
\newcommand{\symPower}[2]{\operatorname{Sym}^{#1}{#2}}
\newcommand{\todo}[1]{\textcolor{red}{TODO: #1}}
\title{On the tensor rank of $3\times 3$ permanent and determinant}
\author{Siddharth Krishna}
\address{Department of Computer Science, New York University}
\email{siddharth@cs.nyu.edu}
\author{Visu Makam}
\address{Department of Mathematics, University of Michigan, Ann Arbor}
\email{visu@umich.edu}
\thanks{The second author was supported by NSF grant DMS-1601229.}
\begin{document}
\maketitle

\begin{abstract}
  The tensor rank and border rank of the $3 \times 3$ determinant tensor is known to be $5$ if characteristic is not two. In this paper, we show that the tensor rank remains $5$ for fields of characteristic two as well. We also include an analysis of $5 \times 5$ and $7 \times 7$ determinant and permanent tensors, as well as the symmetric $3 \times 3$ permanent and determinant tensors. We end with some remarks on binary tensors.
\end{abstract}
 
\section{Introduction}

An alternate way of looking at the rank of a matrix $A$ is as the smallest integer $r$ such that you can write $A$ as a sum of $r$ rank $1$ matrices.
The definition of tensor rank is a generalization of this idea. We consider the tensor space $V_1 \otimes V_2 \otimes \dots \otimes V_n$, where $V_i$ denote finite dimensional vector spaces over a field $K$. Tensors of the form $v_1 \otimes \dots \otimes v_n$ with $v_i \in V_i$ are called simple (or rank $1$) tensors. 

\begin{definition}
The tensor rank $\trk(T)$ of a tensor $T \in V_1 \otimes V_2 \otimes \dots \otimes V_n$ is defined as the smallest integer $r$ such that $T = T_1 + \dots + T_r$ for simple tensors $T_i$. 
\end{definition}

Let $\tensorsOfRank{r}$ denote the set of tensors of rank $\leq r$. Unfortunately, $\tensorsOfRank{r}$ is not Zariski-closed, giving rise to the notion of border rank.

\begin{definition}
The border rank $\brk(T)$ of a tensor $T \in  V_1 \otimes V_2 \otimes \dots \otimes V_n$ is the smallest integer $r$ such that $T \in \overline{\tensorsOfRank{r}}$
\end{definition}

The tensor and border rank of certain tensors have been well studied due to their connections with computational complexity. For instance, the multiplication of an $m \times n$ matrix with an $n \times l$ matrix is a bilinear map, and can hence be seen as a tensor in $\M_{n,m} \otimes \M_{l,n} \otimes \M_{m,l}$, where $\M_{p,q}$ is the space of $p \times q$ matrices.
Every expression for this matrix multiplication tensor in terms of simple tensors gives rise to an algorithm for matrix multiplication, and hence the rank of this tensor is a measure of the complexity of the problem. For example, the tensor and border rank of the matrix multiplication tensor for $m = n = l = 2$ is $7$, and the complexity of the resulting algorithm (Strassen's algorithm) is $O(n^{\log_2 7})$. Various improvements have since been made, see \cite{ACT,Lbook}.

In this paper, we study the determinant and permanent tensors. Let $\{e_i\ |\ 1\leq i \leq n\}$ denote the standard basis for $K^n$, and let $\Sigma_n$ denote the symmetric group on $n$ letters. The determinant tensor is 
$$
\det_n = \sum_{\sigma \in \Sigma_n} \sgn(\sigma) e_{\sigma(1)} \otimes e_{\sigma(2)} \otimes \dots \otimes e_{\sigma(n)} \in (K^n)^{\otimes n}
$$
where $\sgn(\sigma)$ is the sign of the permutation $\sigma$.

Similarly, the permanent tensor is defined as
$$ 
\per_n = \sum_{\sigma \in \Sigma_n}  e_{\sigma(1)} \otimes e_{\sigma(2)} \otimes \dots \otimes e_{\sigma(n)} \in (K^n)^{\otimes n}.
$$

The determinant and permanent tensors have been studied before, see \cite{Derksen} for known upper and lower bounds. For $K = \C$, the tensor rank of $\det_3$ and $\per_3$ were precisely determined by Ilten and Teitler in \cite{IT} to be $5$ and $4$ respectively. This is done by analyzing certain Fano schemes parametrizing linear subspaces contained in the
hypersurfaces $\per_3 = 0$ and $\det_3 = 0$. Using simpler linear algebraic techniques, Derksen and Makam~\cite{DM-explicit} show that the border rank (and tensor rank) of $\det_3$ and $\per_3$ are $5$ and $4$ respectively. Moreover, they show that this holds for all fields of characteristic not equal to two.
Observe that in characteristic $2$, the determinant and permanent tensors are equal. In this paper, we remove the dependence on the characteristic of the field for the tensor rank of the determinant. The main result of this paper is the following:

\begin{theorem} \label{main}
For any field $K$, the tensor rank of $\det_3$ is $5$.
\end{theorem}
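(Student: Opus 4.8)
The plan is to prove the two inequalities $\trk(\det_3)\le 5$ and $\trk(\det_3)\ge 5$ separately, and to isolate the only genuinely new case, which is $\kar K = 2$. For the upper bound I would exhibit an explicit expression of $\det_3$ as a sum of five simple tensors with integer coefficients; reducing such an expression modulo the characteristic of $K$ shows $\trk(\det_3)\le 5$ over every field at once (and in characteristic two this is simultaneously a decomposition of $\per_3=\det_3$). For the lower bound, when $\kar K\ne 2$ one already knows from \cite{DM-explicit} that $\brk(\det_3)=5$, and since $\brk\le\trk$ this gives $\trk(\det_3)\ge 5$. The whole difficulty is therefore concentrated in proving $\trk(\det_3)\ge 5$ when $\kar K=2$.

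The essential point is that in characteristic two one cannot argue through border rank: the flattenings of $\det_3$ all have rank $3$ (the three coordinate slices are the elementary skew, i.e.\ hollow-symmetric, matrices, and they are linearly independent), so flattening only certifies rank $\ge 3$; and since $\det_3=\per_3$ is now a fully symmetric tensor, the characteristic-two specialization may well make the border rank drop, so no border-rank bound can deliver $5$. Hence I would aim for a direct obstruction to a rank-four decomposition.

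Concretely, suppose for contradiction that $\det_3=\sum_{k=1}^4 u_k\otimes v_k\otimes w_k$ over a field of characteristic two. Since $\det_3$ is concise (each flattening is injective), the vectors $\{u_k\}$, $\{v_k\}$ and $\{w_k\}$ each span $K^3$. Contracting the first factor against $\lambda\in K^3$ produces the hollow-symmetric matrix $M(\lambda)=\sum_k\langle u_k,\lambda\rangle\, v_kw_k^{T}$, and a direct computation gives the polynomial identity $M(\lambda)\lambda=0$; equivalently $\sum_k\langle u_k,\lambda\rangle\langle w_k,\lambda\rangle\, v_k=0$ for all $\lambda$. Because $\det_3$ is fully symmetric in characteristic two, the two analogous identities $\sum_k\langle u_k,\lambda\rangle\langle v_k,\lambda\rangle\, w_k=0$ and $\sum_k\langle v_k,\lambda\rangle\langle w_k,\lambda\rangle\, u_k=0$ hold as well. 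Since the four vectors $\{v_k\}$ span $K^3$ they satisfy a one-dimensional space of linear relations, say $\sum_k r_kv_k=0$; the first identity then forces the vector of quadratic forms $(\langle u_k,\lambda\rangle\langle w_k,\lambda\rangle)_k$ to be a scalar multiple $\phi(\lambda)\,(r_k)_k$ of this relation. Comparing two indices $k,k'$ with $r_k,r_{k'}\ne 0$ yields that $\langle u_k,\lambda\rangle\langle w_k,\lambda\rangle$ and $\langle u_{k'},\lambda\rangle\langle w_{k'},\lambda\rangle$ are proportional quadratics; as $K[\lambda_1,\lambda_2,\lambda_3]$ is a unique factorization domain, the unordered pairs of lines $\{[u_k],[w_k]\}$ must then all coincide. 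Feeding this rigidity, together with the two symmetric identities, back into the decomposition collapses it onto too few lines to remain concise, which is the desired contradiction.

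The main obstacle is this last step: turning the ``coincident lines'' rigidity into an outright contradiction requires a careful case analysis according to which of the relation coefficients $r_k$ vanish (a vanishing $r_k$ forces $u_k=0$ or $w_k=0$, i.e.\ a decomposition of length $\le 3$, which must be excluded separately) and according to the mutual position of the lines produced for the three factors. I would organize this as a short finite list of configurations and rule each one out by conciseness. A secondary point to keep in mind throughout is field-independence: every step above is either a polynomial identity or a unique-factorization argument, so it remains valid over an arbitrary field of characteristic two, including $\mathbb{F}_2$ itself, where naive genericity or counting arguments would be unavailable.
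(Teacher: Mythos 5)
Your proposal is correct in substance, and its lower-bound argument takes a genuinely different route from the paper's. The paper proves $\trk(\det_3)\ge 5$ via a subtraction lemma (if $\trk(T-S)\ge r$ for every rank-one $S$, then $\trk(T)\ge r+1$), then normalizes the rank-one tensor $S$ using the $\SL_3\times\Sigma_3$ symmetry of $\det_3$ into three cases according to $\dim\left<v_1,v_2,v_3\right>$, and applies the skew-symmetric flattening of Proposition~\ref{skew-symm} to $\det_3-S$, checking that the resulting $9\times 9$ matrix has rank at least $8$ in every characteristic. You instead attack a hypothetical four-term decomposition directly: the contraction identity $M(\lambda)\lambda=0$, the one-dimensional relation space of the spanning set $\{v_k\}$, and unique factorization force every pair $\{[u_k],[w_k]\}$ with $r_k\ne 0$ onto a single fixed pair of lines. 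This does complete, and more easily than you fear: if all $r_k\ne 0$, then all four $u_k$ lie in the union of two lines, so they span at most a two-dimensional space, contradicting conciseness outright (your two symmetric identities are never needed); if some $r_k=0$, that term vanishes, and a concise three-term decomposition dies by the same first identity, since linear independence of the three $v_k$ forces each product $\langle u_k,\lambda\rangle\langle w_k,\lambda\rangle$ to vanish identically, killing every term. So the case analysis you flag as the ``main obstacle'' is in fact short. Provided you contract against an indeterminate vector $\lambda$ (so that all identities are polynomial identities, as you stipulate, which is what saves the argument over $\mathbb{F}_2$), your method is elementary, needs no Koszul flattening, and in fact proves the lower bound uniformly in all characteristics at once --- the identity $M(\lambda)\lambda=0$ and its two companions hold for $\det_3$ over any field --- so even the citation of \cite{DM-explicit} for $\kar K\ne 2$ is dispensable. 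What the paper's route buys in exchange is reuse of its flattening machinery and border-rank information about the tensors $\det_3-S$; what yours buys is a self-contained, characteristic-free structural argument.

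One caveat: for the upper bound your plan coincides with the paper's (an integral five-term expression reduced modulo the characteristic), but you do not exhibit such an expression, and this is the one ingredient that cannot be supplied by pure reasoning --- the paper found it by computer search over the field of two elements and then adjusted signs to make it valid over $\Z$. As written, your proposal establishes only the lower bound; it becomes a complete proof once an explicit decomposition, such as the one displayed in Section~\ref{upper bounds}, is plugged in.
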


This allows us to extend a result of Derksen in \cite{Derksen} to arbitrary characteristic.

\begin{corollary}
For any field $K$, we have $\brk(\det_n) \leq \trk(\det_n) \leq \left(\frac{5}{6}\right)^{\lfloor n/3 \rfloor} n!$.
\end{corollary}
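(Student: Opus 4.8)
The inequality $\brk(\det_n) \le \trk(\det_n)$ is immediate from $\tensorsOfRank{r} \subseteq \overline{\tensorsOfRank{r}}$, so the real content is the upper bound on $\trk(\det_n)$. The plan is to establish the submultiplicativity estimate
\[
\trk(\det_{d+e}) \le \binom{d+e}{d}\,\trk(\det_d)\,\trk(\det_e)
\]
for all $d,e \ge 0$, and then to iterate it, peeling off $\lfloor n/3\rfloor$ blocks of size $3$ and feeding in $\trk(\det_3)=5$ from Theorem~\ref{main}. Since this estimate is characteristic-free, the only new ingredient relative to Derksen's argument is the characteristic-two case of the main theorem.

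To obtain the estimate I would read the generalized Laplace expansion as a tensor identity. Grouping the $d+e$ tensor factors into the first $d$ and the last $e$, each $\sigma \in \Sigma_{d+e}$ is determined by the set $S = \{\sigma(1),\dots,\sigma(d)\}$ of size $d$ together with bijections $\{1,\dots,d\}\iso S$ and $\{1,\dots,e\}\iso S^c$, and $\sgn(\sigma)$ factors accordingly. This yields
\[
\det_{d+e} \;=\; \sum_{\substack{S \subseteq \{1,\dots,d+e\} \\ |S| = d}} \varepsilon(S)\; \det{}_d^{(S)} \otimes \det{}_e^{(S^c)},
\]
where $\varepsilon(S)=\pm 1$ is the sign of the shuffle sorting $S$ before $S^c$, and $\det_d^{(S)}$ is the determinant tensor on the coordinate subspace spanned by $\{e_s : s \in S\}$, sitting in the first $d$ factors (and $\det_e^{(S^c)}$ in the last $e$). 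Because $\det_d^{(S)}$ is just $\det_d$ transported along a coordinate embedding, and tensor rank is invariant under applying injective linear maps factor-wise, $\trk(\det_d^{(S)}) = \trk(\det_d)$; moreover an outer product on disjoint sets of factors satisfies $\trk(A\otimes B)\le \trk(A)\trk(B)$. The sign $\varepsilon(S)$ is irrelevant to the rank count, so summing over the $\binom{d+e}{d}$ subsets $S$ gives the estimate.

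With the estimate in hand, write $n = 3q + r$ with $q = \lfloor n/3\rfloor$ and $r \in \{0,1,2\}$, and apply it $q$ times:
\[
\trk(\det_n) \;\le\; \left(\prod_{i=0}^{q-1}\binom{n-3i}{3}\right)\trk(\det_3)^q\,\trk(\det_r) \;=\; \frac{n!}{(3!)^q\, r!}\cdot 5^q\cdot \trk(\det_r),
\]
where the product of binomial coefficients telescopes to $n!/\bigl((3!)^q r!\bigr)$. Using the trivial bound $\trk(\det_r)\le r!$, coming from the defining sum over $\Sigma_r$, cancels the $r!$ and leaves exactly $(5/6)^{\lfloor n/3\rfloor}\, n!$, as claimed.

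The genuine difficulty is entirely absorbed into Theorem~\ref{main}: once $\trk(\det_3)=5$ is available over an arbitrary field, every step above is field-independent. The only points demanding attention are the sign bookkeeping in the Laplace expansion—harmless for the rank count since each $\varepsilon(S)$ can be folded into one factor of a simple tensor—and the submultiplicativity of tensor rank under outer products, both of which are routine. I would therefore expect no substantive obstacle here beyond organizing the induction cleanly and verifying the telescoping identity.
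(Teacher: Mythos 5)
Your proposal is correct and follows essentially the same route as the paper: the paper obtains this corollary by combining its characteristic-free rank-$5$ expression for $\det_3$ (equivalently, the upper-bound half of Theorem~\ref{main}) with Derksen's Laplace-expansion argument, which it cites rather than reproduces. Your submultiplicativity estimate $\trk(\det_{d+e}) \le \binom{d+e}{d}\trk(\det_d)\trk(\det_e)$, the telescoping product, and the bound $\trk(\det_r)\le r!$ are precisely a correct reconstruction of that cited argument, so there is no substantive difference beyond your spelling out the details.
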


\subsection{Organization}
In Section~\ref{sec-lower-bounds} we present the standard methods for obtaining lower bounds on tensor and border rank, specifically flattenings and Koszul flattenings.
In Section~\ref{sec-3x3}, we study $\det_3$ and $\per_3$ and show upper bounds via computer-aided search and lower bounds via case analysis and flattenings.
We then study the $5 \times 5$ and $7 \times 7$ determinant and permanent tensors in Section~\ref{sec-5x5-7x7}, and the symmetric determinant and permanent tensors in Section~\ref{sec-symmetric}.
Finally, we turn our attention to binary tensors, and study how the rank of binary tensors varies as you change the characteristic of the underlying field in Section~\ref{sec-binary}.

\section{Lower Bounds}
\label{sec-lower-bounds}

In this section, we recount the standard techniques for showing lower bounds on the tensor and border rank of tensors.
We first describe Strassen's Theorem for deriving lower bounds, and then we introduce flattenings in order to give a slight generalization of the result. Finally, we describe Koszul flattenings. The generalization of Strassen's Theorem that we present is a special case of Koszul flattenings.

\subsection{Flattenings}

The core argument to obtain lower bounds is the following.
Suppose $f$ is a polynomial that vanishes on $\tensorsOfRank{r}$, the set of all tensors of tensor rank $\leq r$. Now, if $f(T) \neq 0$ for some tensor $T$, then we can deduce that $\trk(T) > r$. In fact we even have $\brk(T) > r$. This is because the zero set of $f$ is a Zariski closed set containing $\tensorsOfRank{r}$ and hence contains $\overline{\tensorsOfRank{r}}$. Hence $T \notin \overline{\tensorsOfRank{r}}$, which means $\brk(T) > r$.

Thus we can lower bound the tensor rank or border rank of $T$ if we can find polynomials that vanish on $\tensorsOfRank{r}$. It turns out that it is difficult to find these polynomials in general for large $r$. One of the first non-trivial results in this direction was given by Strassen on so-called $3$-slice tensors, see \cite{ACT}.

\begin{theorem} [Strassen] \label{3-slice}
Let $T  = (e_1 \otimes A + e_2 \otimes B + e_3 \otimes C) \in \K^3 \otimes K^m \otimes K^m$ for $A,B, C \in K^m \otimes K^m$.
Viewing $K^m \otimes K^m$ as $\M_{m,m}$, if $A$ is invertible, then
$$
\brk(T) \geq m + \frac{1}{2}\rk(BA^{-1}C - CA^{-1}B).
$$
\end{theorem}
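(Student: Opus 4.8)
The plan is to bound $\brk(T)$ from below by the rank of a single auxiliary matrix built \emph{linearly} from the three slices, exactly in the spirit of the flattening argument described above. Concretely, I would associate to $T$ the $3m\times 3m$ block matrix
$$
\Phi(T)=\begin{pmatrix} 0 & C & -B \\ -C & 0 & A \\ B & -A & 0\end{pmatrix},
$$
whose entries are linear in $A,B,C$, and prove two things: first that $\rk\Phi(T)\le 2\,\brk(T)$ for every $T$, and second that $\rk\Phi(T)=2m+\rk(BA^{-1}C-CA^{-1}B)$ whenever $A$ is invertible. Combining the two immediately gives $\brk(T)\ge m+\tfrac12\rk(BA^{-1}C-CA^{-1}B)$.

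For the first claim I would check it on a single simple tensor and then use subadditivity of rank together with linearity of $\Phi$. If $T=v\otimes x\otimes y$ with $v=(\alpha,\beta,\gamma)\in\K^3$ and $x,y\in\K^m$, then $A=\alpha\,xy^\top$, $B=\beta\,xy^\top$, $C=\gamma\,xy^\top$, so
$$
\Phi(T)=\begin{pmatrix} 0 & \gamma & -\beta \\ -\gamma & 0 & \alpha \\ \beta & -\alpha & 0\end{pmatrix}\otimes xy^\top .
$$
The determinant of the displayed $3\times 3$ factor vanishes identically (a direct expansion gives $\alpha\beta\gamma-\alpha\beta\gamma=0$), so it has rank at most $2$; since $xy^\top$ has rank $1$, the Kronecker product has rank at most $2$. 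Writing a shortest decomposition of $T$ into $r=\trk(T)$ simple tensors and using $\rk\Phi\!\left(\sum_i T_i\right)\le\sum_i\rk\Phi(T_i)$ gives $\rk\Phi(T)\le 2\,\trk(T)$. Finally, because $\Phi$ is linear and $\rk\Phi(T)\le 2r$ is cut out by the vanishing of the $(2r+1)\times(2r+1)$ minors of $\Phi(T)$, the locus $\{\,T:\rk\Phi(T)\le 2r\,\}$ is Zariski closed and contains $\tensorsOfRank{r}$, hence contains $\overline{\tensorsOfRank{r}}$; this upgrades the bound to $\rk\Phi(T)\le 2\,\brk(T)$, precisely as in the flattening discussion above.

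For the second claim I would use invertibility of $A$ to run block Gaussian elimination, clearing the first block row and the first block column with the invertible corner blocks $\pm A$ sitting in the $(2,3)$ and $(3,2)$ positions. This reduces $\Phi(T)$, by invertible row and column operations, to
$$
\begin{pmatrix} CA^{-1}B-BA^{-1}C & 0 & 0 \\ 0 & 0 & A \\ 0 & -A & 0\end{pmatrix},
$$
whose rank is $2m+\rk(CA^{-1}B-BA^{-1}C)=2m+\rk(BA^{-1}C-CA^{-1}B)$. Substituting into $\rk\Phi(T)\le 2\,\brk(T)$ yields the theorem.

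The step I expect to require the most care is this exact rank computation, together with keeping the whole argument characteristic-free, which is the point of the surrounding paper. The only place the characteristic could intrude is the rank-$\le 2$ estimate on the $3\times 3$ factor: in characteristic two that matrix is symmetric rather than alternating, but its determinant still vanishes identically, so the estimate survives. The elimination in the second claim uses nothing beyond invertibility of $A$ and is valid over any field (in characteristic two $-A=A$, which only simplifies the bookkeeping), and the half-integer in the bound is automatically consistent since $\rk\Phi(T)-2m$ is an honest integer. I would verify the two displayed reductions by hand, since a sign error in the elimination is the likeliest source of trouble.
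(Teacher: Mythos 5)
Your proposal is correct and takes essentially the same route as the paper: the paper proves Proposition~\ref{skew-symm} by exactly your first claim (the flattening has rank at most $2$ on simple tensors, then subadditivity and Zariski closure of the rank conditions), and then recovers Strassen's bound by the same block Gaussian elimination under the invertibility of $A$. Your matrix $\Phi(T)$ differs from the paper's skew-symmetric block matrix only by a permutation of block rows and columns and some signs, so the two arguments are identical in substance.
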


In essence, Strassen's Theorem says for any tensor $T$ as above, if $k$ is the rank of $BA^{-1}C - CA^{-1}B$, then the $k \times k$ minors of $BA^{-1}C - CA^{-1}B$ vanish on tensors of border rank less than $ m+ \lceil k/2 \rceil$.
A more modern way to view the above theorem is as follows.

\begin{proposition} \label{skew-symm}
Let $T,A,B,C$ be as in Theorem~\ref{3-slice}, then
$$ 
\brk(T) \geq \frac{1}{2}\rk \begin{pmatrix} 0 & A & B \\ -A & 0 & C \\ -B & -C & 0 \end{pmatrix}.
$$
\end{proposition}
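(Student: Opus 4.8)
The plan is to treat the displayed block matrix as a \emph{linear flattening} of $T$ and to run exactly the argument from the preceding subsection: exhibit a linear map $T \mapsto M(T)$ into a matrix space, show that simple tensors are sent to matrices of rank at most $2$, and then invoke subadditivity of matrix rank together with Zariski closedness. Concretely, writing $T = e_1\otimes A + e_2 \otimes B + e_3\otimes C$, set
$$
M(T) = \begin{pmatrix} 0 & A & B \\ -A & 0 & C \\ -B & -C & 0 \end{pmatrix} \in \M_{3m,3m}.
$$
Each entry of $M(T)$ is one of $0, \pm A, \pm B, \pm C$, so $M$ is manifestly linear in $T$; in fact $M(T)$ is (up to identifications) the Koszul flattening of $T$ to be described later in this section, which is precisely why this proposition is a special case of that construction.

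The key step is the rank bound on simple tensors. Take $T = (\alpha e_1 + \beta e_2 + \gamma e_3)\otimes y \otimes z$ with $y,z \in K^m$; then $A = \alpha\, yz^{\top}$, $B = \beta\, yz^{\top}$ and $C = \gamma\, yz^{\top}$, so every block of $M(T)$ is a scalar multiple of the rank-one matrix $yz^{\top}$. This lets me factor $M(T)$ as a Kronecker product
$$
M(T) = \begin{pmatrix} 0 & \alpha & \beta \\ -\alpha & 0 & \gamma \\ -\beta & -\gamma & 0\end{pmatrix} \otimes \bigl(yz^{\top}\bigr),
$$
whose rank is the product of the two factor ranks. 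The first factor is a $3\times 3$ skew-symmetric matrix and hence has rank $0$ or $2$, while $yz^{\top}$ has rank at most $1$; therefore $\rk M(T) \le 2$ for every simple $T$.

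With these two facts the border-rank bound follows formally. If $T = T_1 + \dots + T_r$ with each $T_i$ simple, then by linearity and subadditivity $\rk M(T) \le \sum_i \rk M(T_i) \le 2r$, giving $\trk(T) \ge \tfrac12 \rk M(T)$. Moreover the locus $\{T : \rk M(T) \le 2r\}$ is cut out by the vanishing of the $(2r+1)\times(2r+1)$ minors of $M(T)$, which are polynomials in the entries of $T$; since it contains $\tensorsOfRank{r}$ it also contains its Zariski closure $\overline{\tensorsOfRank{r}}$. Hence $\brk(T) = r$ forces $\rk M(T) \le 2r$, i.e.\ $\brk(T) \ge \tfrac12 \rk M(T)$, as claimed.

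Finally, as a consistency check with Theorem~\ref{3-slice}, when $A$ is invertible the top-left $2m\times 2m$ block of $M(T)$ is invertible and a Schur-complement computation gives $\rk M(T) = 2m + \rk(BA^{-1}C - CA^{-1}B)$, so that $\tfrac12 \rk M(T) = m + \tfrac12 \rk(BA^{-1}C - CA^{-1}B)$ recovers Strassen's bound exactly. I expect the only delicate point to be the clean verification of the rank-$\le 2$ claim on simple tensors (the Kronecker factorization and the evenness of the rank of a $3\times 3$ skew-symmetric matrix); everything else is the standard flattening machinery, and, unlike Theorem~\ref{3-slice}, this argument requires no invertibility hypothesis on $A$.
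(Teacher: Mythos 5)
Your proof is correct and follows essentially the same route as the paper: the paper defines the identical flattening $\phi(T) = M(T)$, asserts that simple tensors map to matrices of rank at most $2$, and invokes its general flattening principle (Proposition~\ref{flattening}), which your subadditivity-plus-Zariski-closure argument simply re-proves inline. Your explicit Kronecker factorization of $M(S)$ for simple $S$ is a nice verification of the rank bound that the paper leaves as a one-line remark, and it is valid in every characteristic since the $3\times 3$ alternating factor has identically vanishing determinant.
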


When $A$ is invertible, the following (block) Gaussian elimination procedure shows that we recover Strassen's result:

\begin{align*}
\begin{pmatrix} 0 & A & B \\ -A & 0 & C \\ -B & -C & 0 \end{pmatrix} &\xrightarrow[R_1 \mapsto A^{-1}R_1]{R_2 \mapsto A^{-1}R_2} \begin{pmatrix} 0 & I & A^{-1}B \\ -I & 0 & A^{-1}C \\ -B & -C & 0 \end{pmatrix}  \\
  &\xrightarrow[R_3 \mapsto R_3 + CR_1 -BR_2]{C_3 \mapsto C_3 - C_2(A^{-1}B) + C_1(A^{-1}C) } \begin{pmatrix} 0 & I & 0 \\ -I & 0 & 0 \\ 0 & 0  & CA^{-1}B - BA^{-1}C \end{pmatrix}.
\end{align*}

We remark here that while doing Gaussian elimination of block matrices, one can add a left multiplied block row to other block rows and a right multiplied block column to other block columns (see \cite{DM}). The above Proposition is a generalization of Strassen's result, because it doesn't require $A$ to be invertible

Proposition~\ref{skew-symm} is a special case of a flattening. A flattening of a tensor space $V = V_1 \otimes \dots \otimes V_n$ is any linear map $V \rightarrow \M_{n,n}$. The following straightforward proposition shows how flattenings can be used to show explicit lower bounds on the border rank of tensors.

\begin{proposition} \label{flattening}
Let $\phi: V_1 \otimes V_2 \otimes \dots \otimes V_n \rightarrow \M_{m,m}$ be a linear map. Suppose that for all $S \in \tensorsOfRank{1}$ we have $\rk(\phi(S)) \leq r$, then for any tensor $T \in V_1 \otimes V_2 \otimes \dots \otimes V_n$, we have
\[\brk(T) \geq \frac{\rk (\phi(T))} {r}.\]
\end{proposition}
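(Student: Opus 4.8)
The plan is to first establish the claimed inequality for ordinary tensor rank, where it falls out immediately from linearity of $\phi$ together with subadditivity of matrix rank, and then to upgrade it to border rank by a Zariski-closedness argument. For the tensor rank case, suppose $\trk(T) = s$, so that $T = T_1 + \dots + T_s$ with each $T_i \in \tensorsOfRank{1}$. Since $\phi$ is linear, $\phi(T) = \sum_{i=1}^s \phi(T_i)$, and by subadditivity of matrix rank combined with the hypothesis $\rk(\phi(T_i)) \leq r$ we obtain $\rk(\phi(T)) \leq \sum_{i=1}^s \rk(\phi(T_i)) \leq sr$. Rearranging gives $s \geq \rk(\phi(T))/r$, which is the desired bound for tensor rank.

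To pass to border rank, I would observe that the computation above in fact shows more: \emph{every} $T' \in \tensorsOfRank{s}$ satisfies $\rk(\phi(T')) \leq sr$, so that $\tensorsOfRank{s} \subseteq \phi^{-1}\bigl(\{M \in \M_{m,m} : \rk(M) \leq sr\}\bigr)$. The key point is that the set $\{M : \rk(M) \leq sr\}$ is Zariski closed, being cut out by the vanishing of all $(sr+1) \times (sr+1)$ minors. Since $\phi$ is linear, hence a morphism of varieties, the preimage of this closed set is again Zariski closed, and therefore it contains the closure $\overline{\tensorsOfRank{s}}$. Taking $s = \brk(T)$, we have $T \in \overline{\tensorsOfRank{s}}$ by the definition of border rank, so $\rk(\phi(T)) \leq sr = \brk(T)\cdot r$, which yields the stated inequality.

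The only delicate point—and the step I would treat most carefully—is precisely this upgrade from tensor rank to border rank: one must check that the condition $\rk(\phi(T')) \leq sr$ is genuinely a Zariski \emph{closed} condition on $T'$, so that it is preserved under passing to the closure. This is the whole reason the hypothesis is phrased as a uniform bound holding for \emph{all} simple tensors; it produces a single uniform bound $sr$ valid on all of $\tensorsOfRank{s}$, which then descends to $\overline{\tensorsOfRank{s}}$. Everything else is routine linear algebra, and no invertibility or nondegeneracy assumption on $\phi$ is needed.
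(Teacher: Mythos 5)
Your proposal is correct, and it is essentially the argument the paper intends: the paper states this proposition as ``straightforward'' without a written proof, but its ``core argument'' paragraph at the start of Section~\ref{sec-lower-bounds} is exactly your closedness step, since the $(sr+1)\times(sr+1)$ minors of $\phi(\cdot)$ are polynomials vanishing on $\tensorsOfRank{s}$ whose nonvanishing at $T$ forces $\brk(T) > s$. Your two-step structure (subadditivity of matrix rank for tensor rank, then descent to $\overline{\tensorsOfRank{s}}$ via the Zariski-closed rank condition) fills in precisely the details the paper leaves implicit.
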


We can now prove Proposition~\ref{skew-symm}.
\begin{proof} [Proof of Proposition~\ref{skew-symm}]
Consider $\phi:K^3 \otimes K^m \otimes K^m \rightarrow \M_{3m,3m}$ where 
$$\phi(e_1 \otimes A + e_2 \otimes B + e_3 \otimes C) = \begin{pmatrix} 0 & A & B \\ -A & 0 & C \\ -B & -C & 0 \end{pmatrix}.
$$

Note that for any $S \in \tensorsOfRank{1}$ we have $\rk(\phi(S)) \leq 2$. Now, apply Proposition~\ref{flattening}.
\end{proof}

\subsection{Koszul flattenings}

While it is difficult to find flattenings that give non-trivial lower bounds, one class of flattenings that have proven useful are Koszul flattenings.

Landsberg has constructed some explicit tensors in $\C^m \otimes \C^m \otimes \C^m$ of border rank at least $2m-2$ (resp. $2m-4$) for $m$ even (resp. $m$ odd), see \cite{Landsberg}. The technique to show lower bounds was to use Koszul flattenings. In \cite{DM-ncrk}, the case of $m$ odd is improved from $2m-4$ to $2m-3$ using a concavity result from \cite{DM}, and in \cite{DM-explicit}, these results are extended to an arbitrary field.

To describe Koszul flattenings, we first construct a linear map as follows.
Let $m = 2p+1$ and $n$ be positive integers, and $V$ and $W$ be $m$ and $n$-dimensional vector spaces over $K$ respectively. Let $\bigwedge^i(V)$ denote the $i^{th}$ exterior power of the vector space $V$. We have a linear map $L: V \rightarrow \Hom(\bigwedge^pV,\bigwedge^{p+1}V)$ given by $L(v) : \omega \rightarrow v \wedge \omega$. 

For the tensor space $V \otimes W^{\otimes 2k}$, we define the Koszul flattening $\varphi: V \otimes W^{\otimes 2k} \rightarrow \M_{D,D}$ where $D = {m \choose p}n^{k}$ in the following way. First we have the map
\[V \otimes W^{\otimes 2k} = V \otimes W^{\otimes k} \otimes W^{\otimes k} \xrightarrow[L \otimes \id \otimes \id]{} \Hom(\bigwedge^pV,\bigwedge^{p+1}V) \otimes (W^{\otimes k} \otimes W^{\otimes k}).\]

Identifying $W^{\otimes k} \otimes W^{\otimes k}$ with $\M_{n^k,n^k}$, we get a map 
\[\Hom(\bigwedge^pV,\bigwedge^{p+1}V) \otimes (W^{\otimes k} \otimes W^{\otimes k}) = \M_{{m \choose p},{m \choose p}} \otimes \M_{n^k,n^k} \rightarrow \M_{D,D},\]
where the last map is given by the Kronecker product of matrices. Composing the above maps, we get $\varphi:V \otimes W^{\otimes 2k} \rightarrow \M_{D,D}$.

Before we can analyze the effect of the Koszul flattening on simple tensors, note the following property of $L$, see for example \cite{DM-ncrk}.
\begin{lemma}
For all $v \in V$, $\rk(L(v)) = {2p \choose p}$.
\end{lemma}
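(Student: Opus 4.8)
The plan is to compute the kernel of $L(v)$ explicitly and then apply the rank--nullity theorem; the claimed value drops out of Pascal's identity. Throughout I take $v \neq 0$, which is the relevant case (if $v = 0$ then $L(v) = 0$, and the formula is understood for nonzero $v$, as that is what is needed when the flattening is applied to simple tensors). Since $v \neq 0$, I extend it to a basis $v = e_1, e_2, \dots, e_m$ of $V$ and set $W = \spa(e_2, \dots, e_m)$, so that $\dim W = m - 1 = 2p$.

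I then compute $\ker(L(v))$ using the decomposition
\[
\bigwedge^p V = \left( e_1 \wedge \bigwedge^{p-1} W \right) \oplus \bigwedge^p W .
\]
For $\omega = e_1 \wedge \alpha + \beta$ with $\alpha \in \bigwedge^{p-1} W$ and $\beta \in \bigwedge^p W$, we have $L(v)(\omega) = e_1 \wedge \omega = e_1 \wedge \beta$, since $e_1 \wedge e_1 \wedge \alpha = 0$. The map $\beta \mapsto e_1 \wedge \beta$ sends distinct wedge-monomials in $e_2, \dots, e_m$ to distinct basis monomials of $\bigwedge^{p+1} V$ and is therefore injective, so $e_1 \wedge \beta = 0$ forces $\beta = 0$. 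Hence $\ker(L(v)) = e_1 \wedge \bigwedge^{p-1} W$, of dimension $\binom{2p}{p-1}$.

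Finally, rank--nullity gives
\[
\rk(L(v)) = \dim \bigwedge^p V - \dim \ker(L(v)) = \binom{2p+1}{p} - \binom{2p}{p-1} = \binom{2p}{p},
\]
the last step being Pascal's rule $\binom{2p+1}{p} = \binom{2p}{p} + \binom{2p}{p-1}$. The argument is largely routine; the only point needing care is the injectivity used in the kernel computation --- that wedging with $e_1$ annihilates no nonzero element of $\bigwedge^p W$ --- which is immediate in the monomial basis adapted to $e_1, \dots, e_m$. So I anticipate no genuine obstacle here, and the choice $m = 2p+1$ (forcing source and target of $L(v)$ to have equal dimension $\binom{2p+1}{p}$) is precisely what makes the answer come out symmetrically as $\binom{2p}{p}$.
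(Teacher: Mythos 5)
Your proof is correct, but there is nothing in the paper to compare it against: the paper does not prove this lemma at all, stating it with only a pointer to \cite{DM-ncrk}. Your write-up therefore supplies exactly what the paper defers to a citation, and it does so validly over an arbitrary field --- the monomial-basis decomposition $\bigwedge^p V = \bigl(e_1 \wedge \bigwedge^{p-1} W\bigr) \oplus \bigwedge^p W$, the injectivity of $\beta \mapsto e_1 \wedge \beta$ on $\bigwedge^p W$, rank--nullity, and Pascal's identity all hold in any characteristic, which matters given the paper's purpose. Two small remarks. First, you can shorten the argument by computing the image instead of the kernel: $\im(L(v)) = e_1 \wedge \bigwedge^p V = e_1 \wedge \bigwedge^p W$, and the same injectivity observation gives $\rk(L(v)) = \dim \bigwedge^p W = \binom{2p}{p}$ directly, with no need for rank--nullity or Pascal's rule. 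Second, your caveat about $v = 0$ is the right call: as literally stated (``for all $v \in V$'') the lemma fails at $v = 0$, but only the upper bound $\rk(L(v)) \leq \binom{2p}{p}$ is used downstream in Lemma~\ref{koszul-rank-one}, and that holds trivially when $L(v) = 0$, so the sloppiness in the statement is harmless.
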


We can now show how the Koszul flattening gives us lower bounds on tensor rank.
\begin{lemma} \label{koszul-rank-one}
For any $S \in \tensorsOfRank{1}$, $\rk(\varphi(S)) \leq {2p \choose p}$. 
\end{lemma}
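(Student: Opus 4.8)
The plan is to unwind the definition of $\varphi$ on a simple tensor and reduce the rank computation to two facts: the value $\rk(L(v)) = {2p \choose p}$ established in the previous lemma, and the multiplicativity of matrix rank under the Kronecker product. Concretely, every $S \in \tensorsOfRank{1}$ of $V \otimes W^{\otimes 2k}$ is of the form $S = v \otimes w_1 \otimes \dots \otimes w_{2k}$ with $v \in V$ and $w_i \in W$, and splitting $W^{\otimes 2k} = W^{\otimes k} \otimes W^{\otimes k}$ as in the construction gives
\[ S = v \otimes (w_1 \otimes \dots \otimes w_k) \otimes (w_{k+1} \otimes \dots \otimes w_{2k}). \]

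First I would apply $L \otimes \id \otimes \id$, sending $S$ to $L(v) \otimes (w_1 \otimes \dots \otimes w_k) \otimes (w_{k+1} \otimes \dots \otimes w_{2k})$. Under the identification $W^{\otimes k} \otimes W^{\otimes k} \cong \M_{n^k,n^k}$, the element $(w_1 \otimes \dots \otimes w_k) \otimes (w_{k+1} \otimes \dots \otimes w_{2k})$ becomes the outer product of the two vectors $w_1 \otimes \dots \otimes w_k$ and $w_{k+1} \otimes \dots \otimes w_{2k}$ in $K^{n^k}$, i.e. a matrix $M$ of rank at most $1$. By the definition of $\varphi$, the image $\varphi(S)$ is then the Kronecker product $L(v) \otimes M$, so invoking multiplicativity of rank under the Kronecker product yields $\rk(\varphi(S)) = \rk(L(v)) \cdot \rk(M)$. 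The previous lemma gives $\rk(L(v)) = {2p \choose p}$, while $\rk(M) \leq 1$, so $\rk(\varphi(S)) \leq {2p \choose p}$.

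There is no serious obstacle here; the only points requiring care are the bookkeeping in the identification $W^{\otimes k} \otimes W^{\otimes k} \cong \M_{n^k,n^k}$ needed to confirm that $M$ really is a rank-$\leq 1$ matrix, and the handling of degenerate cases ($v = 0$, or some $w_i = 0$), which is precisely why the conclusion is an inequality rather than an equality. The one external fact I rely on beyond the stated lemma is that $\rk(A \otimes B) = \rk(A)\,\rk(B)$ for a Kronecker product, which holds over any field.
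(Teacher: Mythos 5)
Your proof is correct and follows essentially the same route as the paper: unwind $\varphi$ on a simple tensor, apply the preceding lemma giving $\rk(L(v)) = {2p \choose p}$, and observe that the remaining factor contributes a matrix of rank at most $1$ (the paper phrases this last step as ``all subsequent maps do not increase tensor rank,'' while you make it explicit via Kronecker rank multiplicativity). As a small bonus, your multiplicativity argument directly yields the equality $\rk(\varphi(S)) = {2p \choose p}$ for nonzero simple $S$, which the paper only mentions as following from ``a more careful analysis.''
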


\begin{proof}
By the above lemma, we see that $L \otimes \id \otimes \id$ takes a rank $1$ tensor to a tensor of rank ${2p \choose p}$. Since all subsequent maps do not increase tensor rank, we get the required conclusion. In fact, a more careful analysis will show that $\rk(\varphi(S)) = {2p \choose p}$ for any simple tensor $S$, but we will not need it. 
\end{proof}

\begin{proposition} \label{general-koszul}
For a tensor $T \in V \otimes W^{\otimes 2k}$, we have 
\[\brk(T) \geq \frac{\rk (\varphi(T))}{{2p\choose p}}.\]
\end{proposition}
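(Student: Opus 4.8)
The plan is to recognize that this proposition is an immediate consequence of the general flattening lower bound together with the rank estimate on simple tensors that we have already established. Indeed, the Koszul flattening $\varphi: V \otimes W^{\otimes 2k} \rightarrow \M_{D,D}$ was constructed as a composition of linear maps, and is therefore itself a linear map into a matrix space. Thus $\varphi$ is a flattening in the precise sense required by Proposition~\ref{flattening}, and the proof reduces to verifying the hypothesis of that proposition and reading off the conclusion.

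Concretely, I would proceed in two short steps. First, I invoke Lemma~\ref{koszul-rank-one}, which asserts that $\rk(\varphi(S)) \leq {2p \choose p}$ for every simple tensor $S \in \tensorsOfRank{1}$. This is exactly the uniform rank bound on rank-one tensors that Proposition~\ref{flattening} takes as its input, with the parameter $r$ specialized to $r = {2p \choose p}$. Second, I apply Proposition~\ref{flattening} with $\phi = \varphi$ and this value of $r$, which yields
\[
\brk(T) \geq \frac{\rk(\varphi(T))}{{2p \choose p}}
\]
for every $T \in V \otimes W^{\otimes 2k}$, as claimed.

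There is essentially no obstacle at the level of this statement: all of the genuine content has been front-loaded into the construction of $\varphi$ and into Lemma~\ref{koszul-rank-one}, whose proof controls the rank of $L(v)$ and hence of $\varphi$ on simple tensors. Consequently I would present this proposition as a direct corollary, emphasizing only that the Koszul flattening satisfies the rank-one hypothesis of Proposition~\ref{flattening} with $r = {2p \choose p}$. The only point worth stating carefully is the bookkeeping that the bound applies to all of $\tensorsOfRank{1}$ (so that the zero-locus argument underlying Proposition~\ref{flattening} is valid for border rank and not merely tensor rank), but this is already guaranteed by the formulation of Lemma~\ref{koszul-rank-one}.
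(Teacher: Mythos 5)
Your proof is correct and is exactly the paper's argument: the paper also derives Proposition~\ref{general-koszul} by combining Lemma~\ref{koszul-rank-one} with Proposition~\ref{flattening} applied to $\phi = \varphi$ with $r = {2p \choose p}$. Your write-up just spells out the same two-step application in more detail.
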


\begin{proof}
This follows from the above Lemma~\ref{koszul-rank-one} and Proposition~\ref{flattening}.
\end{proof}

The flattening in Proposition~\ref{skew-symm} is a special case of a Koszul flattening for $p = 1$ and $k = 1$.

\section{Tensor rank of the $3 \times 3$ determinant tensor}
\label{sec-3x3}

In the first part of this section, we first consider the known upper bounds for the ranks of $\det_3$ and $\per_3$, and then give explicit expressions for them that hold in any characteristic.
We then use the flattening-based techniques to give matching lower bounds in the second part.

\subsection{Upper bounds} \label{upper bounds}

An explicit expression for a tensor $T$ in terms of simple tensors naturally gives us an upper bound for tensor rank and border rank of $T$.
Glynn's formula (see \cite{Glynn}) for the permanent tensor is 

$$
\per_n = \frac{1}{2^{n-1}} \sum_{ v \in \{\pm 1\}^{n-1}} (e_1 + v_1 e_2 + \dots v_{n-1}e_n)^{\otimes n}.
$$

In particular, this shows that 
$$
\brk(\per_n) \leq \trk(\per_n) \leq 2^{n-1}
$$
as long as characteristic is not two. For the determinant tensor, known upper bounds are much weaker. The best known upper bound comes from Derksen's formula (see \cite{Derksen}) for $\det_3$.

\begin{align*}
    \det_3 = \frac{1}{2}\Big(& (e_3+e_2)\otimes (e_1-e_2)\otimes (e_1+e_2) \\
           &  {} + (e_1+e_2)\otimes (e_2-e_3)\otimes (e_2+e_3) \\
           &  {} + 2e_2\otimes (e_3-e_1)\otimes (e_3+e_1) \\
           &  {} + (e_3-e_2)\otimes (e_2+e_1)\otimes (e_2-e_1)  \\
           &  {} + (e_1-e_2)\otimes (e_3+e_2)\otimes (e_3-e_2) \Big).
\end{align*}

Derksen uses this expression along with Laplace expansions to show 
$$
\brk(\det_n) \leq \trk(\det_n) \leq \left(\frac{5}{6}\right)^{\lfloor n/3 \rfloor} n!.
$$

Unfortunately, both Glynn's and Derksen's expressions fail in characteristic two because they have denominators which are multiples of two. Hence the only known upper bound for the tensor rank of $\det_3$ and $\per_3$ was $6$, given by the defining expression. 

We find expressions for both $\det_3$ and $\per_3$ as a sum of $5$ simple tensors that are valid over any field $K$. To do this, we first consider $\det_3$ over $F_2$, the field of two elements (note that $\det_3 = \per_3$ over $F_2$). With the help of a computer, we found a way to write $\det_3 = \per_3$ as a sum of $5$ simple tensors. Then by carefully choosing the signs, we were able to find expressions for both $\det_3$ and $\per_3$ that work for any field.
We have 

\begin{align*}
\det_3 &= (e_2 + e_3) \otimes e_1 \otimes e_2 \\ 
       & \quad - (e_1 + e_3) \otimes e_2 \otimes e_1 \\
       & \quad - e_2 \otimes (e_1 + e_3) \otimes (e_2 + e_3)  \\
       & \quad + (e_2 - e_1) \otimes e_3 \otimes (e_1 + e_2 + e_3) \\
       & \quad + e_1 \otimes (e_2 + e_3) \otimes (e_1 + e_3),
\end{align*}

and

 \begin{align*}
   \per_3 &= (e_2 + e_3) \otimes e_1 \otimes e_2  \\
          & \quad + (e_1 + e_3) \otimes e_2 \otimes e_1 \\
          & \quad + e_2 \otimes (e_1 + e_3) \otimes (e_3 - e_2) \\
          & \quad + (e_1 + e_2) \otimes e_3 \otimes (e_1 + e_2 - e_3) \\
          & \quad + e_1 \otimes (e_2 + e_3) \otimes (e_3 - e_1).
\end{align*}

\begin{corollary}
$\brk(\det_3) \leq \trk(\det_3) \leq 5$. 
\end{corollary}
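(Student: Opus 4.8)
The plan is to read off both bounds directly from the explicit expression for $\det_3$ displayed just above the statement. For the first inequality $\brk(\det_3) \leq \trk(\det_3)$, I would simply invoke the general fact that border rank never exceeds tensor rank: if $\trk(T) = r$, then $T \in \tensorsOfRank{r} \subseteq \overline{\tensorsOfRank{r}}$, so $\brk(T) \leq r$. This holds for every tensor over every field and requires no computation.

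For the second inequality, I would point to the displayed formula. Each of its five summands has the form $\pm\, v_1 \otimes v_2 \otimes v_3$ for explicit vectors $v_1, v_2, v_3 \in K^3$, and hence is a simple tensor (absorbing the sign into one of the factors if desired). Since $\det_3$ is thereby written as a sum of five simple tensors, the definition of tensor rank immediately yields $\trk(\det_3) \leq 5$.

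The only remaining content is to confirm that the displayed identity is actually correct, i.e.\ that expanding the right-hand side reproduces $\sum_{\sigma \in \Sigma_3} \sgn(\sigma)\, e_{\sigma(1)} \otimes e_{\sigma(2)} \otimes e_{\sigma(3)}$. I would verify this by distributing each of the five tensor products over the standard basis $e_i \otimes e_j \otimes e_k$ and comparing coefficients of all $27$ basis tensors: the six indexed by permutations should receive $\pm 1$ according to sign, and the remaining $21$ should cancel to $0$. The key point, which I would emphasize, is that every scalar occurring in the expression is an integer ($\pm 1$), so the verification is literally identical over every field $K$; no division by $2$ (or anything else) ever occurs. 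This is exactly the feature that Glynn's and Derksen's formulas lack, and it is what makes the bound valid in characteristic two.

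I expect no genuine obstacle in the proof of the corollary itself: the coefficient-matching is entirely mechanical. The substance of the result lies in having \emph{found} the five-term expression in the first place, which was carried out above (with computer assistance over $F_2$, followed by a careful choice of signs to lift the identity to arbitrary characteristic).
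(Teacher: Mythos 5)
Your proposal is correct and matches the paper's own (implicit) argument exactly: the corollary follows immediately from the displayed five-term expression for $\det_3$, whose integer $\pm 1$ coefficients make it valid over every field, combined with the general inequality $\brk \leq \trk$. Nothing further is needed.
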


\begin{remark}
Tensor rank over $\Z$ is in general an undecidable problem, see \cite{Shitov}. However, the expressions above show that the tensor rank over $Z$ of both $\det_3$ and $\per_3$ is $\leq 5$. On the other hand, Theorem~\ref{main} shows that the tensor rank over $Z$ cannot be less than $5$, and so $\trk_{\Z} (\det_3) = \trk_Z(\per_3) = 5$.
\end{remark}

\subsection{Lower Bounds}
 
In every characteristic other than two, a direct application of Proposition~\ref{skew-symm} gives us that $\rk(\det_3) \geq 5$, see \cite{DM-explicit}. Let us recall the determinant tensor

$$
\det_3 = \sum_{\sigma \in \Sigma_3} \sgn(\sigma) e_{\sigma(1)} \otimes e_{\sigma(2)} \otimes e_{\sigma(3)} \in K^3 \otimes K^3 \otimes K^3.
$$

Identifying $K^3 \otimes K^3$ with $\M_{3,3}$ via $e_i \otimes e_j \mapsto E_{i,j}$, we can identify $K^3 \otimes (K^3 \otimes K^3)$ with $K^3 \otimes \M_{3,3}$. Under this identifcation, we have 

$$
\det_3 = e_1 \otimes \begin{pmatrix} 0 & 0 & 0 \\ 0 & 0& 1 \\ 0 & -1 & 0 \end{pmatrix} + e_2 \otimes \begin{pmatrix} 0 & 0& -1 \\ 0 & 0 & 0  \\ 1 & 0 & 0 \end{pmatrix} + e_3 \otimes \begin{pmatrix} 0 & 1 & 0 \\ -1 & 0 & 0  \\ 0 & 0 & 0 \end{pmatrix}
$$

We recall the proof of the following proposition from \cite{DM-explicit}, as we will modify the proof to remove the dependence on characteristic.

\begin{proposition} [\cite{DM-explicit}] \label{charnot2}
We have $\trk(\det_3) = \brk(\det_3) = 5$ if $\kar K \neq 2$. 
\end{proposition}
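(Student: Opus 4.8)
The plan is to apply the skew-symmetric flattening from Proposition~\ref{skew-symm} directly to the three slices of $\det_3$ and show that the resulting $9 \times 9$ matrix has rank $10$—wait, that is impossible for a $9\times 9$ matrix, so the correct target is that we need a lower bound of $5$, meaning we must exhibit $\frac{1}{2}\rk(M) \geq 5$, i.e. $\rk(M) \geq 10$. Since $M$ is $9 \times 9$ this cannot work, so instead I would reconsider: Proposition~\ref{skew-symm} reads off the three matrices $A, B, C$ from the presentation $\det_3 = e_1 \otimes A + e_2 \otimes B + e_3 \otimes C$, where $A, B, C$ are the three skew-symmetric $3 \times 3$ matrices displayed just above the proposition. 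First I would substitute these explicit $A$, $B$, $C$ into the $9 \times 9$ block matrix
\[
M = \begin{pmatrix} 0 & A & B \\ -A & 0 & C \\ -B & -C & 0 \end{pmatrix},
\]
and compute its rank over a field of characteristic not equal to two.

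Next I would verify that $\rk(M) = 10$ fails dimensionally, so the bound I actually expect is that $M$ has some rank strictly larger than $8$; but to obtain $\brk(\det_3) \geq 5$ I genuinely need $\rk(M) \geq 10$, which the $9\times 9$ matrix cannot supply. Therefore the honest route is that the naive $p=k=1$ flattening only yields $\brk \geq \lceil \rk(M)/2 \rceil$, and one must instead invoke Strassen's formulation (Theorem~\ref{3-slice}): I would check that $A$ (the first slice) has rank $2$ and is \emph{not} invertible, so I would first apply a change of basis in the $K^3$ factor—replacing $e_1, e_2, e_3$ by suitable linear combinations—to arrange that the new first slice $A'$ is invertible. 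Because the three slices together span the space of skew-symmetric $3\times 3$ matrices, a generic combination is invertible, so such a change of basis exists over any field where $2 \neq 0$. Then I would compute $B'(A')^{-1}C' - C'(A')^{-1}B'$ and show its rank is $3$, giving $\brk(\det_3) \geq 3 + \tfrac{1}{2}\cdot 3$, hence $\geq 5$ after rounding via the ceiling in Strassen's statement.

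Combining this lower bound with the upper bound $\trk(\det_3) \leq 5$ from the explicit five-term expression in Section~\ref{upper bounds}, and the trivial inequality $\brk \leq \trk$, I would conclude $\brk(\det_3) = \trk(\det_3) = 5$ whenever $\kar K \neq 2$. I expect the main obstacle to be the step where characteristic two is implicitly used: the change of basis producing an invertible slice, and the rank computation of the commutator-like matrix $B'(A')^{-1}C' - C'(A')^{-1}B'$, both rely on inverting $2$ (the skew-symmetric $3\times 3$ matrices form a $3$-dimensional space only when $2 \neq 0$, since otherwise skew-symmetric and symmetric coincide and the diagonal obstruction appears). This is precisely why the proposition is stated only for $\kar K \neq 2$, and identifying exactly where the factor of $2$ enters is what the subsequent sections will need to repair in order to prove Theorem~\ref{main}.
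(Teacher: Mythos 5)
Your proposal contains two genuine errors, and unfortunately the second one destroys the route you fall back on after (incorrectly) abandoning the route that actually works. First, you dismiss the direct application of Proposition~\ref{skew-symm} on the grounds that a $9\times 9$ matrix cannot have rank $10$, concluding the flattening ``cannot supply'' the bound $\brk(\det_3)\geq 5$. But you do not need rank $10$: the paper's proof computes that the $9\times 9$ matrix has rank $9$ when $\kar K\neq 2$ (six nonzero entries sit alone in their row or column, and the remaining computation reduces to the $3\times 3$ minor $\left(\begin{smallmatrix} 0 & -1 & 1\\ -1 & 0 & -1\\ 1 & -1 & 0\end{smallmatrix}\right)$, which has rank $3$ precisely when $2\neq 0$), so $\brk(\det_3)\geq 9/2 = 4.5$, and since border rank is an integer this forces $\brk(\det_3)\geq 5$. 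Ironically, you invoke exactly this integrality rounding at the end of your Strassen computation ($3+\tfrac{3}{2}$ ``hence $\geq 5$ after rounding''), so your refusal to apply it to the flattening bound is an inconsistency, not a real obstruction.

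Second, your replacement plan cannot be carried out. You propose a change of basis in the $K^3$ factor to make the first slice invertible, asserting that because the slices span the skew-symmetric $3\times 3$ matrices, ``a generic combination is invertible.'' This is false and in fact self-contradictory: every linear combination of the slices of $\det_3$ is again a skew-symmetric $3\times 3$ matrix, and an odd-size skew-symmetric matrix over a field with $\kar K\neq 2$ is always singular, since $\det X = \det(-X^{T}) = (-1)^3\det X$ forces $\det X = 0$. (Changes of basis in the other two factors only replace slices $X$ by $PXQ$, preserving rank, so they do not help either.) This non-invertibility of every slice is precisely why the paper needs Proposition~\ref{skew-symm} --- the generalization of Strassen's theorem that drops the invertibility hypothesis --- rather than Theorem~\ref{3-slice} itself. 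So the correct proof is the one you discarded in your first paragraph; the one you kept cannot be repaired.
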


\begin{proof}
Applying Proposition~\ref{skew-symm}, we get that

$$\brk(\det_3) \geq \frac{1}{2} \rk \left( \arraycolsep=5pt\def\arraystretch{1} \begin{array}{ccc|ccc|ccc}  
0 & 0 & 0 & 0 & 0 & 0 & 0 & 0 & {\color{red} -1} \\
0 & 0 & 0 & 0 & 0 & {\color{red} 1} & 0 & 0 & 0 \\
0 & 0 & 0 & 0 & -1 & 0 & 1 & 0 & 0 \\
\hline
0 & 0 & 0 & 0 & 0 & 0 &0 & {\color{red} 1} & 0  \\
0 & 0 & -1 & 0 & 0 & 0 &-1 & 0 & 0 \\
0 & {\color{red} 1} & 0 &0 & 0 & 0 & 0 &0 & 0 \\
\hline
0 & 0 & 1 & 0 & -1& 0  & 0 & 0 & 0 \\
0 & 0 & 0  &{\color{red}  1} & 0 & 0 & 0 & 0 & 0 \\
{\color{red} -1} & 0 & 0 & 0 & 0 & 0 & 0 & 0 & 0 \\
\end{array} \right)
$$

This matrix contains only 12 nonzero entries of the form $\pm 1$. Six of these entries (marked red) are in a column or a row with no other nonzero entry, reducing our computation to a $3 \times 3$ minor $ \begin{pmatrix} 0 & -1& 1 \\ -1 & 0 & -1  \\ 1 & -1 & 0 \end{pmatrix}$. This minor has rank $3$ as long as characteristic is not two, and hence we have $\brk(\det_3) \geq \frac{9}{2} = 4.5$. But since border rank is an integer, we have $\brk(\det_3) \geq 5$. On the other hand, we have $\brk(\det_3) \leq 5$ by the expression in Section~\ref{upper bounds}, giving us the required conclusion.

\end{proof}

The problem with this argument in characteristic two is that the aforementioned $3 \times 3$ minor has rank $2$ instead of $3$. This only gives that $\trk(\det_3) \geq \brk(\det_3) \geq 4$. Nevertheless, we are able to modify the argument to show that the tensor rank of the $3 \times 3$ determinant is $5$. First we need a simple lemma.

\begin{lemma}
Let $T \in V = V_1 \otimes V_2 \otimes \dots \otimes V_n$. Suppose $\trk(T - S) \geq r$ for every rank $1$ tensor $S \in V$, then we have $\trk(T) \geq r + 1$.
\end{lemma}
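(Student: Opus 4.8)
The plan is to prove the contrapositive. Suppose $\trk(T) \leq r$; I want to exhibit a rank $1$ tensor $S$ with $\trk(T - S) \leq r - 1$, which contradicts the hypothesis that $\trk(T - S) \geq r$ for \emph{every} rank $1$ tensor $S$. The idea is that a short decomposition of $T$ itself already hands us the tensor $S$ to subtract off.

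More precisely, first I would unwind the assumption $\trk(T) \leq r$: by the definition of tensor rank, we can write $T = S_1 + S_2 + \dots + S_k$ as a sum of $k \leq r$ simple (rank $1$) tensors $S_i \in V$. If $k = 0$ then $T = 0$, but then taking $S = 0$ (or noting the statement is about $r \geq 1$) the claim is handled trivially, so I would assume $k \geq 1$. Next, I would single out one summand, say $S = S_k$, which is by construction a rank $1$ tensor in $V$. Then $T - S = S_1 + \dots + S_{k-1}$ is visibly a sum of $k - 1 \leq r - 1$ simple tensors, so $\trk(T - S) \leq r - 1$. This directly contradicts the hypothesis $\trk(T - S) \geq r$ applied to this particular $S$. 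Hence the assumption $\trk(T) \leq r$ is untenable, and we conclude $\trk(T) \geq r + 1$.

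The only genuine subtlety to flag is the degenerate case where $T$ has a short decomposition in which we cannot meaningfully ``remove'' a term, which is really just the $T = 0$ or $r = 0$ boundary; the statement is only interesting and only invoked for $r \geq 1$, and the argument above covers it. I do not expect any real obstacle here: the lemma is essentially a restatement of subadditivity of tensor rank under subtracting a single rank $1$ tensor, and the proof is a one-line counting argument on the number of simple summands. The main thing to be careful about is logical direction — arguing by contradiction from $\trk(T) \leq r$ rather than trying to build up $\trk(T)$ directly — and ensuring that the tensor $S$ we subtract is drawn from an actual optimal (or at least length-$\leq r$) decomposition of $T$, so that it is genuinely rank $1$ and genuinely a summand.
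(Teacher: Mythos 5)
Your proof is correct and takes essentially the same route as the paper's: the paper likewise argues by contradiction, writes $T = T_1 + \dots + T_k$ with $k \leq r$ simple tensors, takes $S = T_1$, and concludes $\trk(T - S) \leq k - 1 \leq r - 1$, contradicting the hypothesis. The degenerate case you flag ($k = 0$, i.e.\ $T = 0$) is silently ignored in the paper — the lemma is only invoked with $T = \det_3$ and $r = 4$ — so your extra care there is harmless but not needed.
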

\begin{proof}
Suppose $\trk(T) \leq r$, then we have $T = T_1 + \dots + T_k$ with $k \leq r$, where $T_i$ are rank $1$ tensors. Now, take $S = T_1$ to see that $\trk(T - S) \leq k-1 \leq r-1$ contradicting the hypothesis.
\end{proof}

Now, we are ready to prove Theorem~\ref{main}.

\begin{proof} [Proof of Theorem~\ref{main}]
We want to prove that $\trk(\det_3) \geq 5$. By the above lemma, it suffices to prove that $\trk(\det_3 - S) \geq 4$ for every rank $1$ tensor $S$. Observe that $\SL_3$ acts on $K^3 \otimes K^3 \otimes K^3$ by $g \cdot (v_1 \otimes v_2 \otimes v_3) = gv_1 \otimes gv_2 \otimes gv_3$ for $g \in \SL_3$ and $v_i \in K^3$. The action of $g \in \SL_3$ preserves tensor rank and border rank since it is a linear map preserving the set of rank $1$ tensors. There is also an action of the symmetric group on three letters $\Sigma_3$ that permutes the tensor factors. This action too preserves tensor rank and border rank, and further it commutes with the action of $\SL_3$. Thus we have an action of $\SL_3 \times \Sigma_3$ on $K^3 \otimes K^3 \otimes K^3$ that preserves tensor rank and border rank. Further, the tensor $\det_3$ is invariant under this action.

Now, let $S = v_1 \otimes v_2 \otimes v_3$ be a rank $1$ tensor. We want to show $\trk(\det_3 - S) \geq 4$. There are $3$ cases.

\begin{itemize}

\item \textbf{Case 1}: $v_1,v_2,v_3$ are linearly independent. Then w.l.o.g, we can assume $S = \lambda e_1 \otimes e_2 \otimes e_3$, by applying the action of an appropriate $g \in \SL_3$. Now, apply Proposition~\ref{skew-symm} to $T = (\det_3 - \lambda e_1 \otimes e_2 \otimes e_3)$ to get 

$$\brk(T) \geq \frac{1}{2} \rk \left( \arraycolsep=5pt\def\arraystretch{1} \begin{array}{ccc|ccc|ccc}  
0 & 0 & 0 & 0 & 0 & 0 & 0 & 0 & {\color{red} -1} \\
0 & 0 & 0 & 0 & 0 &  1 - \lambda & 0 & 0 & 0 \\
0 & 0 & 0 & 0 & -1 & 0 & 1 & 0 & 0 \\
\hline
0 & 0 & 0 & 0 & 0 & 0 &0 & {\color{red} 1} & 0  \\
0 & 0 & -1 + \lambda & 0 & 0 & 0 &-1 & 0 & 0 \\
0 & {\color{red} 1} & 0 &0 & 0 & 0 & 0 &0 & 0 \\
\hline
0 & 0 & 1 & 0 & -1& 0  & 0 & 0 & 0 \\
0 & 0 & 0  &{\color{red}  1} & 0 & 0 & 0 & 0 & 0 \\
{\color{red} -1} & 0 & 0 & 0 & 0 & 0 & 0 & 0 & 0 \\
\end{array} \right).
$$

Once again observe that the red entries are in a column or row with no other nonzero entries, reducing our computation to a $4 \times 4$ minor. It is easy to check that this gives $\brk(T) \geq 4$ in all characteristic.

\item \textbf{Case 2}: The span $\left< v_1,v_2,v_3 \right>$ is 2-dimensional. In this case, w.l.o.g, can assume $S = e_1 \otimes e_2 \otimes (a e_1 + b e_2)$, by using the action of $\SL_3 \times \Sigma_3$ as in the previous case. Now, apply Proposition~\ref{skew-symm} to $T = (\det_3 -  e_1 \otimes e_2 \otimes (a e_1 + b e_2))$ to get 

$$\brk(T) \geq \frac{1}{2} \rk \left( \arraycolsep=5pt\def\arraystretch{1} \begin{array}{ccc|ccc|ccc}  
0 & 0 & 0 & 0 & 0 & 0 & 0 & 0 & -1 \\
0 & 0 & 0 & -a & -b &  1 & 0 & 0 & 0 \\
0 & 0 & 0 & 0 & -1 & 0 & 1 & 0 & 0 \\
\hline
0 & 0 & 0 & 0 & 0 & 0 &0 & 1& 0  \\
a & b & -1 & 0 & 0 & 0 &-1 & 0 & 0 \\
0 & 1 & 0 &0 & 0 & 0 & 0 &0 & 0 \\
\hline
0 & 0 & 1 & 0 & -1& 0  & 0 & 0 & 0 \\
0 & 0 & 0  &  1 & 0 & 0 & 0 & 0 & 0 \\
 -1 & 0 & 0 & 0 & 0 & 0 & 0 & 0 & 0 \\
\end{array} \right).
$$

Applying the row transformation $R_5 \mapsto R_5 + a R_9 - bR_6$ and the column transformations $C_5 \mapsto  C_5 + bC_6$ and $C_4 \mapsto C_4 + a C_6$, we see that we are back to computing the rank of the matrix in Proposition~\ref{charnot2}, which as we have seen is at least $8$ in all characteristics. Hence $\brk(T) \geq 8/2 = 4$ as required.

\item \textbf{Case 3}: The span $\left< v_1,v_2,v_3 \right>$ is 1-dimensional. Once again, w.l.o.g, $S = \lambda e_1 \otimes e_1 \otimes e_1$. We are reduced to computing the rank of the matrix 
$$
 \left( \arraycolsep=5pt\def\arraystretch{1} \begin{array}{ccc|ccc|ccc}  
0 & 0 & 0 & -\lambda & 0 & 0 & 0 & 0 & -1\\
0 & 0 & 0 & 0 & 0 & 1 & 0 & 0 & 0 \\
0 & 0 & 0 & 0 & -1 & 0 & 1 & 0 & 0 \\
\hline
\lambda & 0 & 0 & 0 & 0 & 0 &0 & 1 & 0  \\
0 & 0 & -1 & 0 & 0 & 0 &-1 & 0 & 0 \\
0 &  1 & 0 &0 & 0 & 0 & 0 &0 & 0 \\
\hline
0 & 0 & 1 & 0 & -1& 0  & 0 & 0 & 0 \\
0 & 0 & 0  &1 & 0 & 0 & 0 & 0 & 0 \\
 -1 & 0 & 0 & 0 & 0 & 0 & 0 & 0 & 0 \\
\end{array} \right).
$$

But again, the row transformations $R_4 \mapsto R_4 + \lambda R_9$ and $R_1 \mapsto R_1 - \lambda R_8$, puts us back to computing the rank of the matrix in Proposition~\ref{charnot2}. The rest of the analysis is as in the previous case. 
\end{itemize}

\end{proof}

While we have successfully computed the tensor rank, the border rank still remains undetermined.

\begin{problem}
What is the border rank of $\det_3$ over an algebraically closed field of characteristic two?
\end{problem}

\section{$5 \times 5$ and $7\times 7$ determinant and permanent tensors}
\label{sec-5x5-7x7}

In this section we study the ranks of the $5 \times 5$ and $7 \times 7$ determinant and permanent tensors.
Assume $K$ is a field of characteristic $0$.
From the results in \cite{DM-explicit}, we know that $\det_3$ has strictly larger tensor rank and border rank than $\per_3$, i.e., 
\[\brk(\per_3) = \trk(\per_3) = 4 < 5 = \trk(\det_3) = \brk(\det_3).\]

We would like to separate $\per_n$ and $\det_n$ for larger $n$. The upper bounds we know for the tensor rank and border rank for $\per_n$ are stronger than the ones we know for $\det_n$. On the other hand the best known lower bounds for both are the same, see \cite{Derksen}. Using Koszul flattenings, we can separate $\det_5$ and $\per_5$. 

\begin{proposition}
We have 
\[13 \leq  \trk(\per_5),\brk(\per_5) \leq 16 < 17 \leq \brk(\det_5),\trk(\det_5) \leq 20.\]

\end{proposition}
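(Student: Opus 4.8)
The statement bundles four estimates, and the plan is to prove the two upper and the two lower bounds separately; the arithmetic $16<17$ then yields the advertised separation of $\per_5$ from $\det_5$. Throughout $\kar K=0$, as assumed in this section. For the permanent, Glynn's formula applies verbatim and gives $\trk(\per_5)\le 2^{5-1}=16$, hence $\brk(\per_5)\le\trk(\per_5)\le16$. The determinant is harder: Derksen's formula only yields $\trk(\det_5)\le(5/6)^{\lfloor5/3\rfloor}5!=100$, so reaching $20$ requires an explicit decomposition. I would hunt for one by computer, in the spirit of the rank-$5$ expressions for $\det_3$ obtained above, seeding the search with a Laplace-type ansatz that splits the five rows as $3+2$ and reduces $\det_5$ to $3\times3$ and $2\times2$ blocks; since this ansatz alone only gives $100$, the drop to $20$ must come from cancellations across blocks, which such a search can locate. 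Once a list of $20$ simple tensors is produced, correctness is a finite expansion, giving $\brk(\det_5)\le\trk(\det_5)\le20$.

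For the lower bounds I would apply Proposition~\ref{general-koszul} to $\det_5,\per_5\in(K^5)^{\otimes5}$ with $V=W=K^5$, so that $m=5$, $p=2$, and $(K^5)^{\otimes5}=V\otimes W^{\otimes2k}$ with $k=2$. The Koszul flattening is then a map $\varphi\colon(K^5)^{\otimes5}\to\M_{D,D}$ with $D=\binom{5}{2}5^2=250$ and normalizing constant $\binom{2p}{p}=\binom{4}{2}=6$, so that $\brk(T)\ge\rk(\varphi(T))/6$. It therefore suffices to show $\rk(\varphi(\per_5))\ge73$, forcing $\brk(\per_5)\ge\lceil73/6\rceil=13$, and $\rk(\varphi(\det_5))\ge97$, forcing $\brk(\det_5)\ge\lceil97/6\rceil=17$; since $\trk\ge\brk$, the same lower bounds hold for tensor rank.

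The technical heart, and the main obstacle, is computing the ranks of these two explicit $250\times250$ matrices over $\Q$ rigorously rather than numerically. To make this tractable I would exploit the torus action: the diagonal torus of $\GL_5$ acts on all five tensor factors, both $\det_5$ and $\per_5$ are weight vectors of weight $(1,1,1,1,1)$, and $\varphi$ is equivariant for the induced actions, so each flattening matrix respects the weight gradings of its domain and codomain and decomposes into independent blocks indexed by weights. Computing rank block-by-block with exact arithmetic reduces the $250\times250$ computation to many small, certifiable pieces. The two difficulties I anticipate are (i) producing and verifying the explicit $20$-term decomposition of $\det_5$, since no formula cited above reaches $20$, and (ii) checking that the Koszul ranks clear the thresholds $73$ and $97$; the weight-block reduction is precisely the device I expect to turn (ii) into a finite, humanly verifiable computation.
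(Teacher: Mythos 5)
Your treatment of three of the four bounds coincides with the paper's own proof. For the lower bounds, the paper does exactly what you propose: it applies Proposition~\ref{general-koszul} to $\det_5,\per_5\in K^5\otimes(K^5)^{\otimes 4}$ (so $p=2$, $k=2$, $D=\binom{5}{2}5^2=250$, divisor $\binom{4}{2}=6$) and computes the ranks of the resulting $250\times 250$ matrices by computer, referring to its Python code rather than giving a hand-checkable certificate; your thresholds are the right ones, since $\rk(\varphi(\per_5))\geq 73$ forces $\brk(\per_5)\geq\lceil 73/6\rceil=13$ and $\rk(\varphi(\det_5))\geq 97$ forces $\brk(\det_5)\geq 17$. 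Your torus-weight block decomposition is a legitimate refinement for making that computation exact, but it is an implementation detail, not a different method. The bound $\trk(\per_5)\leq 2^4=16$ via Glynn is likewise the paper's argument verbatim.

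The genuine divergence is the upper bound $\trk(\det_5)\leq 20$, and here you have put your finger on a real problem. The paper's entire justification is the sentence that the upper bounds are ``due to Glynn and Derksen as mentioned in Section~\ref{upper bounds},'' but, as you correctly compute, the Derksen bound displayed there gives $\left(\tfrac{5}{6}\right)^{\lfloor 5/3\rfloor}5!=100$ at $n=5$, not $20$; so the paper's citation does not deliver the stated bound (for comparison, $20$ is what the formula gives at $n=4$). However, your proposed repair is not a proof either: you only describe a computer search for a $20$-term decomposition, you do not produce one, and you give no argument that one exists --- a point your own write-up flags as an open difficulty. So, strictly, what your proposal establishes is $13\leq\brk(\per_5)\leq\trk(\per_5)\leq 16$ and $17\leq\brk(\det_5)\leq\trk(\det_5)\leq 100$; the bound $\leq 20$ remains unproven in your text, just as it is unsupported by the paper's one-line attribution.
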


\begin{proof}
The upper bounds are due to Glynn and Derksen as mentioned in Section~\ref{upper bounds}. The lower bounds come from applying Proposition~\ref{general-koszul}. This requires finding the rank of a large matrix, which we do with the help of a computer. We omit the details, referring the interested reader to the Python code available at \cite{code}.
\end{proof}

Using the same technique as above, we get the following bounds for the tensor rank and border rank of $\per_7$ and $\det_7$.

\begin{proposition}
We have 
$$
42 \leq \brk(\per_7),\trk(\per_7) \leq 64,
$$
and 
$$
62 \leq \brk(\det_7),\trk(\det_7) \leq 100.
$$
\end{proposition}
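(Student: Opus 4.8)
The plan is to mirror the two-sided strategy already used for the $5\times 5$ case: the upper bounds come from known explicit rank decompositions, and the lower bounds from a single Koszul flattening whose rank is computed by machine. For the upper bounds I would apply Glynn's formula (Section~\ref{upper bounds}), which gives $\brk(\per_7)\le\trk(\per_7)\le 2^{6}=64$, and the explicit Laplace-type construction for the determinant from \cite{Derksen}, which gives $\brk(\det_7)\le\trk(\det_7)\le 100$. Since $\brk\le\trk$ in general, no extra work is needed for the border-rank side of the upper bounds.

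The substance of the argument is the two lower bounds, both obtained from Proposition~\ref{general-koszul}. For a tensor in $(K^7)^{\otimes 7}$ I would take $p=3$ and $k=3$, so that $m=2p+1=7=\dim V$ and $1+2k=7$ accounts for all seven tensor factors. The flattening $\varphi$ is then a $D\times D$ matrix with $D=\binom{7}{3}7^{3}=12005$, and because $\binom{2p}{p}=\binom{6}{3}=20$ (Lemma~\ref{koszul-rank-one}), Proposition~\ref{general-koszul} reads
\[\brk(T)\;\ge\;\frac{\rk(\varphi(T))}{20}.\]
It therefore suffices to show that $\rk(\varphi(\per_7))\ge 821$ and $\rk(\varphi(\det_7))\ge 1221$, since then $\brk(\per_7)\ge\lceil 821/20\rceil=42$ and $\brk(\det_7)\ge\lceil 1221/20\rceil=62$.

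The construction of $\varphi$ and the evaluation of these two ranks are the crux. I would assemble $\varphi$ exactly as in Section~\ref{sec-lower-bounds}, from the wedge map $L\colon V\to\Hom(\bigwedge^{3}V,\bigwedge^{4}V)$ together with the Kronecker product arising from the identification $W^{\otimes 3}\otimes W^{\otimes 3}\cong\M_{343,343}$, and then substitute the integer coordinate tensors $\det_7$ and $\per_7$. The main obstacle is purely computational: $\varphi(\det_7)$ and $\varphi(\per_7)$ are $12005\times 12005$ matrices, well beyond naive exact Gaussian elimination over $\Q$. The observation that makes the computation both rigorous and feasible is that these matrices have integer entries, so reduction modulo a prime can only decrease the rank, $\rk_{\mathbb{F}_p}(\varphi(T)\bmod p)\le\rk_{\Q}(\varphi(T))$; hence exhibiting a single prime $p$ for which the mod-$p$ rank already reaches the thresholds $821$ and $1221$ gives a valid lower bound on the characteristic-zero border rank. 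This reduces the task to fast exact linear algebra over a finite field, further helped by the sparsity of the Koszul matrix, and the explicit implementation can be deferred to the code at \cite{code}, exactly as in the $5\times 5$ case.

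I expect the only genuine difficulties to be implementational: getting the signs and index bookkeeping of the exterior-power map $L$ correct, and making sure the chosen prime is not one of the finitely many for which the mod-$p$ rank drops below the true characteristic-zero rank. The latter is easily guarded against by recomputing the rank for a second prime and checking that the thresholds are still met.
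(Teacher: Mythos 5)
Your proposal follows the paper's proof essentially verbatim: the paper's entire argument for this proposition is ``the same technique as the $5\times 5$ case,'' namely upper bounds from Glynn's formula ($2^{6}=64$ for $\per_7$) and Derksen's Laplace-expansion bound for $\det_7$, and lower bounds by applying Proposition~\ref{general-koszul} to the Koszul flattening and computing the rank of the resulting matrix by machine (deferred to \cite{code}). Your bookkeeping for the flattening is correct and matches the paper's setup: $p=3$, $k=3$, $D=\binom{7}{3}7^{3}=12005$, denominator $\binom{6}{3}=20$, and thresholds $821$ and $1221$; the reduction-mod-$p$ argument ($\rk_{\mathbb{F}_p}\le\rk_{\Q}$ for integer matrices) is a legitimate way to make the large rank computations rigorous, and is a nice addition of detail the paper leaves implicit.

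One caveat, which you inherit from the paper rather than introduce: Derksen's bound as quoted in Section~\ref{upper bounds} is $\trk(\det_n)\le\left(\frac{5}{6}\right)^{\lfloor n/3\rfloor}n!$, which for $n=7$ gives $\frac{25}{36}\cdot 5040=3500$, not $100$ (and for $n=5$ it gives $100$, not the $20$ claimed in the $5\times 5$ proposition). So the step ``Derksen's Laplace-type construction gives $\trk(\det_7)\le 100$'' is not justified by the cited result; neither your write-up nor the paper's one-line proof exhibits a decomposition of $\det_7$ of size $100$. As written, your proof (like the paper's) establishes $42\le\brk(\per_7)\le\trk(\per_7)\le 64$ and $62\le\brk(\det_7)\le\trk(\det_7)\le 3500$; to obtain the stated upper bound of $100$ one would need a genuinely better construction, or else the proposition's upper bound should be corrected.
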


Hence, Koszul flattenings are not powerful enough to separate $\per_7$ and $\det_7$. Moreover, we point out that Koszul flattenings are helpful only for finding lower bounds when $n$ is odd.

\section{$3 \times 3$ symmetric determinant and permanent tensors}
\label{sec-symmetric}
A more natural notion is to consider the determinant and permanent as homogeneous polynomials, which in the language of tensors correspond to symmetric tensors. Assuming $K$ is an algebraically closed field of characteristic $0$, we think of homogeneous polynomials in $m$ variables of degree $d$ as elements of $\symPower{d}{K^m} \subset (K^m)^{\otimes d}$.

We define the symmetric determinant tensor
\[ \sdet_n  = \sum_{\sigma \in \Sigma_n}  \sgn(\sigma) x_{1,\sigma(1)} x_{2,\sigma(2)} \dots x_{n,\sigma(n)} \in \symPower{n}{K^{n^2}} \subseteq (K^{n^2})^{\otimes n}, \]
and the symmetric permanent tensor
\[\sperm_n  = \sum_{\sigma \in \Sigma_n}  x_{1,\sigma(1)} x_{2,\sigma(2)} \dots x_{n,\sigma(n)} \in \symPower{n}{K^{n^2}} \subseteq (K^{n^2})^{\otimes n} .\]

\begin{proposition} \label{sdet-lower}
We have $\brk(\sperm_3),\brk(\sdet_3) \geq 14$. 
\end{proposition}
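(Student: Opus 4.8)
The plan is to lower-bound the ordinary border rank using the Koszul flattening of Proposition~\ref{general-koszul}, exactly as in the $5\times 5$ and $7\times 7$ cases treated above. Both $\sdet_3$ and $\sperm_3$ are cubic forms in the nine matrix entries, so each sits inside $\symPower{3}{K^9} \subseteq (K^9)^{\otimes 3}$; I would simply regard the tensor as an ordinary element $T \in K^9 \otimes K^9 \otimes K^9$ and forget the symmetry, since the statement concerns ordinary border rank. Because $T$ is symmetric, it is immaterial which of the three tensor factors we single out as the distinguished copy of $V$ in the Koszul construction.

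First I would fix the parameters of the flattening. Writing $(K^9)^{\otimes 3} = V \otimes W^{\otimes 2k}$ with $V = W = K^9$ and $k = 1$, we have $m = \dim V = 9 = 2p+1$, so $p = 4$, and $n = \dim W = 9$. The relevant binomial coefficient is $\binom{2p}{p} = \binom{8}{4} = 70$, while the flattening $\varphi$ lands in $\M_{D,D}$ with $D = \binom{9}{4}\,9^{k} = 126\cdot 9 = 1134$. Proposition~\ref{general-koszul} then gives, for $T \in \{\sdet_3,\sperm_3\}$,
\[ \brk(T) \;\geq\; \frac{\rk(\varphi(T))}{70}. \]
Since border rank is an integer, to conclude $\brk(T) \geq 14$ it suffices to show $\rk(\varphi(T)) > 13 \cdot 70 = 910$, i.e.\ $\rk(\varphi(T)) \geq 911$.

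What remains is to compute, or at least lower-bound, the rank of the $1134 \times 1134$ integer matrix $\varphi(T)$ for each of the two tensors, and I expect this to be the main obstacle, since the matrix is large and exact rank computation over $\Q$ is both expensive and numerically delicate. To handle it cleanly I would reduce the integer matrix modulo a convenient prime $p$ and compute its rank over $\mathbb{F}_p$: for an integer matrix, a nonvanishing $r \times r$ minor mod $p$ lifts to a nonzero minor over $\Q$, so $\rk_{\mathbb{F}_p}(\varphi(T)) \leq \rk_{\Q}(\varphi(T))$, and a computed rank of at least $911$ over $\mathbb{F}_p$ already certifies the bound we need. Building the matrix $\varphi(T)$ explicitly from the definition of $\varphi$ (apply $L \otimes \id \otimes \id$, then the Kronecker product) is routine but bookkeeping-heavy, so I would delegate it to the accompanying code at \cite{code} and record only the resulting rank for $\sdet_3$ and $\sperm_3$.
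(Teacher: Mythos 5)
Your proposal is correct and follows exactly the paper's approach: apply the Koszul flattening of Proposition~\ref{general-koszul} to $\sdet_3,\sperm_3 \in K^9\otimes K^9\otimes K^9$ (with $p=4$, $\binom{8}{4}=70$, $D=1134$) and certify $\rk(\varphi(T))\geq 911$ by computer. The paper's proof is precisely this, with the details delegated to the accompanying code; your explicit parameter bookkeeping and the mod-$p$ rank certification are sensible implementation details of the same argument.
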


\begin{proof}
We apply Propositon~\ref{general-koszul}. Once again, we omit the details. The interested reader is referred to the Python code available at \cite{code}.
\end{proof}

There is a notion of symmetric rank for a symmetric tensor. 

\begin{definition}
The symmetric rank $\symRank(T)$ of a symmetric tensor $T \in \symPower{d}{K^m}$ is the smallest $r$ such that $T = L_1^{\otimes d} + L_2^{\otimes d} + \dots + L_r^{\otimes d}$, with $L_i \in \symPower{1}{K^m}$. 
\end{definition}

Let $\mathcal{V}_r$ denote the set of symmetric tensors of rank $\leq r$. Once again, $\mathcal{V}_r$ need not be a Zariski closed set.

\begin{definition}
We define symmetric border rank $\symBRank(T)$ of a tensor $T$ as the smallest $r$ such that $T \in \overline{\mathcal{V}_r}$. 
\end{definition}

For a symmetric tensor, we know that the symmetric rank is at least as big as the tensor rank. Hence, we recover a result of Farnsworth in \cite{Farnsworth}.

\begin{corollary} [\cite{Farnsworth}] \label{farn}
We have $\symBRank(\sperm_3),\symBRank(\sdet_3) \geq 14$. 
\end{corollary}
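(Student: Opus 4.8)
The plan is to reduce the statement entirely to Proposition~\ref{sdet-lower} by observing that symmetric border rank always dominates ordinary border rank for a symmetric tensor. First I would record the elementary inclusion $\mathcal{V}_r \subseteq \tensorsOfRank{r}$: every summand $L_i^{\otimes d}$ appearing in a symmetric rank decomposition is the simple tensor $L_i \otimes L_i \otimes \cdots \otimes L_i$, hence has tensor rank $1$. Thus any symmetric tensor of symmetric rank $\leq r$ is, forgetting the symmetry of its decomposition, a tensor of rank $\leq r$ in the usual sense.

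Next I would pass to Zariski closures. Since closure is monotone with respect to inclusion of sets, the containment $\mathcal{V}_r \subseteq \tensorsOfRank{r}$ yields $\overline{\mathcal{V}_r} \subseteq \overline{\tensorsOfRank{r}}$. Consequently, if a symmetric tensor $T$ lies in $\overline{\mathcal{V}_r}$ then it also lies in $\overline{\tensorsOfRank{r}}$. By the definitions of $\symBRank$ and $\brk$, this gives the inequality $\brk(T) \leq \symBRank(T)$ for every symmetric tensor $T$, which is precisely the statement quoted in the text just before the corollary.

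Finally I would apply this inequality to the two symmetric tensors $\sdet_3$ and $\sperm_3$, both of which live in $\symPower{3}{K^9}$, and combine it with Proposition~\ref{sdet-lower}. Since that proposition asserts $\brk(\sdet_3), \brk(\sperm_3) \geq 14$, we obtain $\symBRank(\sdet_3) \geq \brk(\sdet_3) \geq 14$ and likewise $\symBRank(\sperm_3) \geq \brk(\sperm_3) \geq 14$, completing the argument.

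I do not anticipate a genuine obstacle: the only points requiring any care are the verification that each $L^{\otimes d}$ is genuinely a rank-one tensor and that taking closures preserves the inclusion, both of which are immediate. All the substantive work—the Koszul flattening rank computation underlying the bound $14$—has already been carried out in the proof of Proposition~\ref{sdet-lower}, so this corollary is purely a matter of transporting that lower bound across the (easy) comparison of the two border-rank notions.
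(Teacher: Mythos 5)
Your proposal is correct and follows essentially the same route as the paper: the authors also deduce the corollary from Proposition~\ref{sdet-lower} via the observation that symmetric (border) rank dominates ordinary (border) rank for a symmetric tensor. Your write-up simply makes explicit the closure argument ($\mathcal{V}_r \subseteq \tensorsOfRank{r}$ implies $\overline{\mathcal{V}_r} \subseteq \overline{\tensorsOfRank{r}}$) that the paper leaves implicit.
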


\begin{remark}
It was conjectured by Comon that for a symmetric tensor $T$, we have $\symRank(T) = \trk(T)$ and $\symBRank(T) = \brk(T)$. However, this has recently been proved false, see \cite{Shitov-cex}. In view of this, Proposition~\ref{sdet-lower} is a stronger result than Corollary~\ref{farn}.
\end{remark}

\section{Binary tensors}
\label{sec-binary}
Informally, binary tensors are those whose entries are 0 and 1.
These tensors are interesting because they live in tensor spaces of all characteristics.
A natural question is, for a fixed binary tensor $T$, how does its tensor rank vary as we change the characteristic?

Formally, let $\underline{n} = (n_1, \dots, n_m)$ be a dimension vector, and let $[\underline{n}] \defeq [n_1] \times [n_2] \times \dots \times [n_m]$, where $[k] \defeq \{1,2,\dots, k\}$ for any $k \in \Z_{>0}$.
Consider the free $\Z$-module $V_{\Z} = \Z^{n_1} \otimes \Z^{n_2} \otimes \dots \otimes \Z^{n_m}$. Let $\{e_i\ |\  1 \leq i \leq n_i\}$ denote the standard basis for $\Z^{n^i}$. Then for each $I = (i_1,\dots,i_m) \in [\underline{n}]$, we write $e_I = e_{i_1} \otimes \dots \otimes e_{i_m}$. The set $\{e_I \ |\  I \in [\underline{n}]\}$ form a basis for $V_\Z$.

\begin{definition}
A binary tensor $T \in V_\Z$ is a tensor of the form $T = \sum_{I \in [\underline{n}]} a_Ie_I$, where $a_I = 0$ or $1$. 
\end{definition}

We can compare the ranks of binary tensors across different fields. For any field $K$, we can consider $V_\Z \otimes_\Z K = \K^{n_1} \otimes \K^{n_2} \otimes \dots \otimes \K^{n_m}$. Any tensor $T \in V_\Z$ can be viewed as a tensor in $V_\Z \otimes_\Z K$ by considering $T \otimes_\Z 1$. We will abuse notation, and refer to this tensor by $T$ as well.

\begin{definition}
For a binary tensor $T \in V_\Z$, we define $\trk_K(T)$ as the tensor rank of $T \in V_\Z \otimes_\Z K$. We define $\trk_p(T) = \trk_K(T)$ for any algebraically closed field $K$ of characteristic $p$. 
\end{definition} 

We leave it to the reader to verify that the above definition of $\trk_p$ does not depend on the choice of algebraically closed field. It is easy to find examples of tensors for which $\trk_0(T) \geq \trk_p(T)$. Recall that for matrices over a field, tensor rank coincides with the usual definition of matrix rank.

\begin{proposition}
Let $p$ be a prime, and consider 
\[M = 
\begin{pmatrix}
0 & 1 & 1 & \dots & 1 \\
1 & 0 & 1 & \dots & 1 \\
1 & 1 & \ddots &   & \vdots \\
\vdots & \vdots &  &  \ddots & 1 \\
1 & 1 & \dots & 1 & 0
\end{pmatrix}
\in \M_{p+1,p+1} (\Z) = \Z^{p+1} \otimes \Z^{p+1}.\]
Then we have $\trk_p(M) = p$ and $\trk_0(M) = p+1$. 
\end{proposition}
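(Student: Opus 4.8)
The plan is to reduce everything to a determinant and a kernel computation. Since $M$ is a matrix, its tensor rank over any field $K$ coincides with its ordinary rank $\rank_K(M)$, as recalled just before the statement; so I only need to compute $\rank_K(M)$ for $K$ of characteristic $0$ and for $K$ of characteristic $p$. The key observation is that $M = J - I$, where $I$ is the $(p+1)\times(p+1)$ identity matrix and $J$ is the all-ones matrix of the same size. First I would record the spectrum of $J$ over any field: as a rank-one matrix, $J$ acts as multiplication by $p+1$ on the all-ones vector $\mathbf{1}$ and as $0$ on the hyperplane $\{x : \sum_i x_i = 0\}$. Consequently $M = J - I$ has eigenvalue $p$ on $\mathbf{1}$ and eigenvalue $-1$ on that hyperplane, so that $\det M = p\cdot(-1)^{p}$.

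The two cases then fall out. In characteristic $0$ we have $\det M = \pm p \neq 0$, hence $M$ is invertible and $\rank_K(M) = p+1$, giving $\trk_0(M) = p+1$. In characteristic $p$ we have $\det M = 0$, so immediately $\rank_K(M) \leq p$. To pin down the rank exactly I would compute the kernel: the equation $Mx = 0$ is equivalent to $Jx = x$, and since $Jx = (\sum_i x_i)\,\mathbf{1}$ this forces $x$ to be a scalar multiple of $\mathbf{1}$; conversely $M\mathbf{1} = (p+1)\mathbf{1} - \mathbf{1} = p\,\mathbf{1} = 0$ in characteristic $p$. Thus the kernel is exactly the line spanned by $\mathbf{1}$, so $\rank_K(M) = (p+1) - 1 = p$, giving $\trk_p(M) = p$.

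The only genuinely nontrivial point, and hence the part I would expect to be the main obstacle, is the lower bound $\rank \geq p$ in characteristic $p$ (the upper bound being immediate from $\det M = 0$). The kernel computation above settles it cleanly; alternatively one could exhibit a nonzero $p\times p$ minor, namely the determinant of the $p\times p$ matrix obtained by deleting the last row and column of $M$, which by the same eigenvalue argument equals $(p-1)(-1)^{p-1}\equiv(-1)^{p}\neq 0$ in characteristic $p$. Either route avoids any explicit expansion, so no routine grinding is needed beyond these structural observations.
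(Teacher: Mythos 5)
Your proof is correct and takes essentially the same route as the paper: both rest on the observation that $M$ has eigenvalue $p$ on the all-ones vector $\mathbf{1}$ and eigenvalue $-1$ with multiplicity $p$ on a complement, the paper realizing the latter via the quotient $K^{p+1}/\langle \mathbf{1}\rangle$ while you use the coordinate-sum-zero hyperplane together with $M = J - I$. Your explicit determinant and kernel computations in the two characteristics just spell out more carefully what the paper compresses into ``the eigenvalues are $p,-1,\dots,-1$, giving the required conclusion.''
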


\begin{proof}
For any field $K$, we can think of $M$ as a linear automorphism of $K^m$. The vector $v = (1,1,\dots,1)^t$ is an eigenvector with eigenvalue $p$, and let $L$ denote the $1$-dimensional space spanned by $v$. The linear map descends to the quotient $K^m/L$, and in this quotient, $M$ acts by the scalar $-1$ as is evident from the fact that $M(e_i) = v - e_i$.

In short, we have that the eigenvalues of $M$ are $p, -1,-1,\dots,-1$, giving us the required conclusion.
\end{proof}

For any matrix $A$, we define $A^{\otimes t}$ the $t$-fold Kronecker product of $A$. Since tensor rank for matrices coincides with the usual rank, and matrix rank is multiplicative w.r.t Kronecker products, we have the following.

\begin{corollary} \label{nolower}
We have $$
\frac{\trk_p(M^{\otimes t})}{\trk_0(M^{\otimes t})} = \left(\frac{p}{p+1}\right)^t.
$$ 
\end{corollary}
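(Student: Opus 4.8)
The plan is to reduce the entire statement to the two facts recalled immediately before it: for a $2$-tensor the tensor rank agrees with the ordinary matrix rank, and matrix rank is multiplicative under the Kronecker product. The previous proposition already supplies the base case $\trk_p(M) = p$ and $\trk_0(M) = p+1$, so the corollary is really a bookkeeping computation built on these inputs.

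First I would note that $M^{\otimes t}$ is again a matrix, i.e. a $2$-tensor in $\K^{(p+1)^t} \otimes \K^{(p+1)^t}$ after base change, so that $\trk_K(M^{\otimes t}) = \rk_K(M^{\otimes t})$ for every field $K$. Next I would apply multiplicativity of rank, $\rk_K(A \otimes B) = \rk_K(A)\,\rk_K(B)$, inductively to obtain $\rk_K(M^{\otimes t}) = \rk_K(M)^t$, and hence $\trk_K(M^{\otimes t}) = \rk_K(M)^t$. Finally I would substitute the values from the previous proposition: over an algebraically closed field of characteristic $p$ this gives $\trk_p(M^{\otimes t}) = p^t$, while in characteristic $0$ it gives $\trk_0(M^{\otimes t}) = (p+1)^t$. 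Dividing yields $(p/(p+1))^t$, as claimed.

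The only point that requires any care — and the one I would state rather than compute — is the compatibility of the two operations with base change: forming $M^{\otimes t}$ over $\Z$ and then extending scalars to $K$ must agree with extending $M$ to $K$ first and then taking the Kronecker product, so that the quantity $\trk_K(M^{\otimes t})$ is genuinely computed by the rank of the matrix $M^{\otimes t}$ over $K$. This holds because $- \otimes_\Z K$ commutes with finite tensor products, and it is the identity $\bigl(M \otimes_\Z 1\bigr)^{\otimes t} = M^{\otimes t} \otimes_\Z 1$ worth recording explicitly; everything else is the routine multiplicativity computation above.
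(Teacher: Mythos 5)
Your proof is correct and follows exactly the route the paper intends: the corollary is stated right after the paper recalls that tensor rank coincides with matrix rank for matrices and that matrix rank is multiplicative under Kronecker products, so the values $\trk_p(M^{\otimes t}) = p^t$ and $\trk_0(M^{\otimes t}) = (p+1)^t$ follow from the preceding proposition just as you compute. Your extra remark about compatibility of the Kronecker power with base change $-\otimes_\Z K$ is a sound point that the paper leaves implicit, but it does not change the argument.
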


In particular, the ratio can be made as small as we wish by taking a large enough power of $t$. Finding a binary tensor which has a larger rank in positive characteristic compared to characteristic $0$ is harder. However, from the main result in this paper, we can deduce the following:

\begin{corollary}
We have
$$
\trk_p(\per_3) = \begin{cases} 5 & \text{if } p = 2 \\
 4 & \text{ otherwise.}
 \end{cases}
$$
\end{corollary}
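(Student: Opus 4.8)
The plan is to split into two cases according to the characteristic $p$ and, in each case, reduce the computation of $\trk_p(\per_3)$ to results already established in the paper or in the cited literature.

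First I would treat the case $p = 2$. The key observation is that over a field of characteristic two every sign $\sgn(\sigma)$ equals $1$, so the defining expressions for $\per_3$ and $\det_3$ literally coincide; that is, $\per_3 = \det_3$ as tensors in $V_\Z \otimes_\Z K$ whenever $\kar K = 2$. Consequently $\trk_2(\per_3) = \trk_2(\det_3)$. Now Theorem~\ref{main} asserts that $\trk(\det_3) = 5$ over \emph{any} field $K$, and in particular over an algebraically closed field of characteristic two, which is exactly the field used to define $\trk_2$. This immediately yields $\trk_2(\per_3) = 5$, settling the first case.

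For the remaining case $p \neq 2$ (including $p = 0$), I would invoke the prior determination of the tensor rank of $\per_3$ in characteristic different from two. The upper bound $\trk_p(\per_3) \leq 4$ is furnished by Glynn's formula specialized to $n = 3$, which expresses $\per_3$ as a sum of four simple tensors and is valid precisely when $2$ is invertible. The matching lower bound $\trk_p(\per_3) \geq 4$ is the content of the results of Ilten--Teitler \cite{IT} and Derksen--Makam \cite{DM-explicit}, the latter establishing $\trk(\per_3) = 4$ for all fields of characteristic not equal to two. Combining the two bounds gives $\trk_p(\per_3) = 4$ for every $p \neq 2$.

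There is no genuine obstacle in this corollary beyond correctly assembling the ingredients: the only conceptual point is the identification $\per_3 = \det_3$ in characteristic two, which is what lets the hard input—Theorem~\ref{main}, whose proof occupies the bulk of Section~\ref{sec-3x3}—be transported from the determinant to the permanent. Once that identification is noted, the characteristic-two value follows from Theorem~\ref{main} and the remaining values follow from the cited literature, so the argument is a short case analysis rather than a new proof.
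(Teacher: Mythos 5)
Your proposal is correct and follows the same route the paper intends: the characteristic-two case comes from the identification $\per_3 = \det_3$ over fields where $-1 = 1$ together with Theorem~\ref{main}, and the remaining characteristics come from Glynn's upper bound and the lower bound of \cite{DM-explicit} (valid over any field of characteristic not two, hence over the algebraically closed fields used to define $\trk_p$). The paper leaves this assembly implicit, and your write-up fills it in correctly with no gaps.
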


In other words, we have $\trk_2(\per_3)/\trk_0(\per_3) = 5/4 > 1$. This raises an interesting open question -- is there an upper bound on $\trk_p(T)/\trk_0(T)$?

\begin{problem} \label{last}
Is there a real number $D$ such that $\frac{\trk_p(T)}{\trk_0(T)} \leq D$ for any binary tensor $T$?
\end{problem}

Note that the ratio $\trk_p(T)/\trk_0(T)$ can be as small a positive number as we wish, from Corollary~\ref{nolower}.


\begin{thebibliography}{99}

\bibitem{code} \url{https://github.com/siddharth-krishna/tensor-rank}.

\bibitem{ACT} P.~B\"urgisser, M.~Clausen and M.~A.~Shokrollahi, {\it Algebraic complexity theory},  Grundlehren der Mathematischen Wissenschaften [Fundamental
              Principles of Mathematical Sciences]~{\bf 315}, With the collaboration of Thomas Lickteig, Springer-Verlag, Berlin, 1997.
\bibitem{Derksen} H.~Derksen, {\it On the nuclear norm and singular value decomposition of tensors}, Found. Comput. Math.~{\bf 16} (2016), no.~3, 779-811.
\bibitem{DM} H.~Derksen and V.~Makam, {\it Polynomial degree bounds for matrix semi-invariants}, Adv. Math.~{\bf 310} (2017), 44--63.
\bibitem{DM-ncrk} H.~Derksen and V.~Makam, {\it On non-commutative rank and tensor rank}, Linear and Multilinear Algebra, published online (2017).
\bibitem{DM-explicit} H.~Derksen and V.~Makam, {\it Explicit tensor of border rank at least $2d-2$ in $K^d \otimes K^d \otimes K^d$ in arbitrary characteristic}, {\tt arXiv:1709.06131}, [math.RA], 2017.

\bibitem{Landsberg} J.~M.~Landsberg, {\it Nontriviality of equations and explicit tensors in {$\Bbb{C}^m\otimes\Bbb{C}^m\otimes\Bbb{C}^m$} of border rank at least {$2m-2$}}, J. Pure Appl. Algebra~{\bf 219} (2015), 3677-3684. 



\bibitem{Farnsworth} C.~Farnsworth, {\it Koszul-{Y}oung flattenings and symmetric border rank of the determinant}, J.~Algebra~{\bf 447} (2016), 664-676. 

\bibitem{Glynn} D.~G.~Glynn, {\it The permanent of a square matrix}, European J. Combin.~{\bf 31} (2010), no.~7, 1887-1891.
\bibitem{IT} N.~Ilten and Z.~Teitler, {\it Product ranks of $3 \times 3$ determinant and permanent}, Canad. Math. Bull.~{\bf 59} (2016), no.~2, 311-319.

\bibitem{Landsberg} J.~M.~Landsberg, {\it Non-triviality of equations and explicit tensors in $\C^m\otimes \C^m\otimes \C^m$ of border rank at least $2m-2$}, J. Pure Appl. Algebra~{\bf 219} (2015), no.~8, 3677--3684.
\bibitem{Lbook} J.~M.~Landsberg, {\it Tensors: Geometry and Applications}, Graduate Studies in Mathematics~{\bf 128}, American Mathematical Society, Providence, RI, 2012.

\bibitem{Shitov} Y.~Shitov, {\it How hard is tensor rank?}, {\tt arXiv:1611.01559} [math.CO], 2016.
\bibitem{Shitov-cex} Y.~Shitov, {\it A counterexample to Comon's conjecture}, {\tt arXiv:1705.08740} [math.CO], 2017.

\end{thebibliography}
\end{document}